\def\bm#1{\mathpalette\bmstyle{#1}}
\def\bmstyle#1#2{\mbox{\boldmath$#1#2$}}
\DeclareMathAlphabet{\mathcalligra}{T1}{calligra}{m}{n}
\newcommand{\s}{\mathscr}
\newcommand{\id}{\mathrm{id}} 
\newcommand{\pf}{{\rm pf}}
\newcommand{\un}{{\rm un}}
\newcommand{\ra}{{\rm ra}}
\newcommand{\A}{{\mathbb A}}
\newcommand{\Z}{{\mathbb Z}}
\newcommand{\N}{{\mathbb N}}
\newcommand{\G}{{\mathbb G}}
\newcommand{\F}{{\mathbb F}}
\newcommand{\W}{{\mathbb W}}
\newcommand{\Rg}{{\mathbb R}}
\newcommand{\spec}{\mathrm{Spec}}
\newcommand{\Hom}{{\mathrm{Hom}}}
\newcommand{\kk}{\kappa} 
\newcommand{\cO}{{\mathcal O}}
\def\e{\kern 0.08em}
\def\be{\kern -.1em}
\def\bbe{\kern -.07em}
\def\le{\kern 0.03em}
\def\lle{\kern 0.015em}
\def\lbe{\kern -.025em}
\def\llbe{\kern -.03em}
\numberwithin{equation}{section}
\newtheorem{lemma}[equation]{Lemma}
\newtheorem{theorem}[equation]{Theorem}
\newtheorem{proposition-definition}[equation]{Proposition-Definition}
\newtheorem{proposition}[equation]{Proposition}
\theoremstyle{definition}
\newtheorem{remark}[equation]{Remark}
\newtheorem{remarks}[equation]{Remarks}
\begin{document}

\input xy 
\xyoption{all}

\title{Greenberg algebras and ramified Witt vectors}

\author{Alessandra Bertapelle}
\author{Maurizio Candilera}
\address{Universit\`a degli Studi di Padova, Dipartimento di Matematica ``Tullio Levi-Civita'', via Trieste 63, I-35121 Padova}
\email{alessandra.bertapelle@unipd.it}
\email{maurizio.candilera@unipd.it}

\topmargin -1cm
\smallskip
\baselineskip 15pt

\begin{abstract} Let  $\cO$ be a complete discrete valuation ring of mixed characteristic and with finite residue field $\kk$. We study a natural morphism $\bm r\colon \Rg_\cO\to \W_{\cO,\kk}$ between the Greenberg algebra of $\cO$ and the special fiber of the scheme of ramified Witt vectors over $\cO$. It is a universal homeomorphism with pro-infinitesimal kernel that can be explicitly described in some cases.    \end{abstract}

\subjclass[2010]{Primary 13F35,14L15}

\keywords{Greenberg algebra, ramified Witt vectors}



\date{\today}
\maketitle
\parskip 5pt

Let $\cO$ be a complete discrete valuation ring with field of fractions $K$ of characteristic $0$ and perfect residue field $\kk$ of positive characteristic $p$. We fix a uniformizing parameter $\pi\in \cO$. It is known from \cite{gre, lip, bga} that for any $n\in \N$ one can associate a {\it Greenberg algebra} $\Rg_n$ to the artinian local ring $\cO/\pi^n\cO$, i.e., the algebraic $\kk$-scheme that represents the fpqc sheaf associated to the presheaf 
 \[\{\text{affine $\kk$-schemes} \}\to \{\cO/\pi^n\cO\text{-algebras}\}, \qquad \spec(A)\mapsto W(A)\otimes_{W(\kk)} \cO/\pi^n\cO, \]
 where $W(A)$ is the ring of $p$-typical Witt vectors with coefficients in $A$. 
 There are canonical morphisms $\Rg_n\to \Rg_m$ for $n\geq m$, and passing to the limit one gets an affine ring scheme $\Rg_\cO$ over $\kk$ such that $W(A)\otimes_{W(\kk)} \cO=\Rg_\cO(A):=\Hom_{\kk}(\spec(A),\Rg_\cO)$ for any $\kk$-algebra $A$; see \eqref{e.ra}. The Greenberg algebra $\Rg_n$ is the fundamental stone for the construction of the Greenberg realization $\mathrm{Gr}_n(X)$ of a $\cO/\pi^n\cO$-scheme $X$; this is a $\kk$-scheme whose set of  $\kk$-rational sections coincides with $X(\cO/\pi^n\cO)$ \cite{gre},\cite[Lemma 7.1]{bga}, and it plays a role in many results in Arithmetic Geometry.

Assume that $\kk$ is finite. One can define for any $\cO$-algebra $A$ the algebra $W_\cO(A)$ of {\it ramified Witt vectors with coefficients in} $A$ \cite{haz,dri,ff, acz,sch}. These algebras are important objects in $p$-adic Hodge theory. It is well-known that if $A$ is a perfect $\kk$-algebra, there is a natural isomorphism 
\begin{equation*}
  W(A)\otimes_{W(\kk)} \cO\simeq W_\cO(A)
\end{equation*} (see \cite[I.1.2]{ff}, \cite[\S 1.2]{acz}, \cite[Prop. 1.1.26]{sch}). Hence $\Rg_\cO(A)\simeq W_\cO(A) $ if $A$ is a perfect $\kk$-algebra. For a general $\kk$-algebra $A$, Drinfeld's map $u\colon W(A)\to W_\cO(A)$ induces a unique homomorphism of $\cO$-algebras $\Rg_\cO(A)\to W_\cO(A)$, functorial in $A$. Hence there is a morphism of ring schemes over $\kk$
\[\bm r\colon \Rg_\cO\to \W_\cO\times_\cO \spec(\kk),\] where  $\W_\cO$ is the ring scheme of ramified Witt vectors  over $\spec(\cO)$, that is, $W_\cO(A)=\Hom_\cO(\spec(A),\W_\cO)$ for any $\cO$-algebra $A$. It is not difficult to check that the  morphism $\bm r$ is surjective with pro-infinitesimal kernel; hence, up to taking inverse perfection, one can identify  Greenberg algebra $\Rg_\cO$ with the special fiber of the scheme of ramified Witt vectors $\W_\cO$ (Theorem \ref{t.m}).  
The morphism $\bm r$ is deeply related to
 the scheme theoretic version $\bm u$ of  Drinfeld's functor  (Proposition \ref{prop.u}) and a great part of the paper is devoted to the study of $\W_\cO$ and $\bm u$.
In the unramified case the special fiber of $\bm u$ is a universal homeomorphism and we can explicitly describe its kernel (Proposition \ref{p.perf}). The ramified case requires ad hoc constructions (Proposition \ref{p.integral}).   
All these results allow  a better understanding of the kernel of $\bm r$ (Lemma \ref{l.examples}).

\subsection*{Notation} For any morphism of $\cO$-schemes $f\colon X\to Y$ and any  $\cO$-algebra $A$ (i.e., any homomorphism of commutative rings with unit $\cO\to A$)
we write $X(A)$ for $\Hom_{\cO}(\spec(A),X)$ and $f_A$ for the map $X(A)\to Y(A)$ induced by $f$. For any $\cO$-scheme $X$, we write $X_{\kk}$ for its special fiber. If $f\colon \spec (B)\to \spec(A)$, $f^*\colon A\to B$ denotes the corresponding morphism on global sections.
 We use bold math symbols for (ramified) Witt vectors and important morphisms. 

\subsection*{Acknowledgements} We thank an anonymous referee for suggesting new references that inspired shorter proofs of the main results.

\section{Greenberg algebras}\label{s.ga}
Let $\cO$ be a complete discrete valuation ring with field of fractions $K$ of characteristic $0$ and perfect residue field $\kk$ of positive characteristic $p$. Let $\pi\in \cO$ denote a fixed uniformizing parameter and let $e$ be the absolute ramification index so that $\cO\simeq \oplus_{i=0}^{e-1}W(\kk)\pi^i$ as $W(\kk)$-modules.
Let $\W$ (respectively,  $\W_m$) denote the ring scheme of $p$-typical Witt vectors of infinite length (respectively, length $m$) over $\spec(\Z)$ and let $\W_{\kk}$ (respectively,  $\W_{m,\kk}$) be its base change to $\spec(\kk)$. 

The {\it Greenberg algebra} associated to the artinian local ring $\cO/\pi^{n}\cO$, $n\geq 1$, is the $\kk$-ring scheme $\Rg_n$ that represents the fpqc sheaf associated to the presheaf 
 \[\{\text{affine $\kk$-schemes} \}\to \{\cO/\pi^n\cO\text{-algebras}\}, \qquad \spec(A)\mapsto W(A)\otimes_{W(\kk)} \cO/\pi^n\cO; \]
it is unique up to unique isomorphism \cite[Proposition A.1]{lip}. The explicit description of $\Rg_n$ requires some work in general (we refer the interested reader to \cite{gre, lip, bga}) but is easy when considering indices that are multiple of $e$. Indeed 
 $\Rg_{me}\simeq \prod_{i=0}^{e-1}\W_{m,\kk}$  as $\kk$-group schemes and for any
$\kk$-algebra $A$ it is 
\begin{equation*}
\Rg_{me}(A)\simeq \oplus W_m(A)\pi^i\simeq W_m(A)[T]/(f_\pi(T))\simeq W_m(A)\otimes_{W_m(\kk)}\cO/\pi^{me}\cO,
\end{equation*}
where $f_\pi(T) \in W(\kk)[T]$ is the Eisenstein polynomial  of $\pi$;
see \cite[(3.6) and Remark 3.7(a)]{bga}, where $\cO$ is denoted by $R$, and \cite[Lemma 4.4]{bga} with $R'=\cO$, $R=W(\kk)$, $m=n$ and $\Rg_n$ denoted by $\mathscr R_n$. Hence the addition law on the $\kk$-ring scheme $\Rg_{me}$ is defined component wise (via the group structure of $\W_{m,\kk}$) while the multiplication depends on $f_\pi(T)$  and mixes indices.

The canonical homomorphisms $\cO/\pi^{ne}\cO\to \cO/\pi^{me}\cO , n\geq m$, induce morphisms of ring schemes $\Rg_{ne}\to \Rg_{me}$ \cite[Proposition A.1 (iii)]{lip} and the Greenberg algebra associated to $\cO$ is then defined as the affine $\kk$-ring scheme
 \begin{equation*}
 \Rg_\cO=\varprojlim \Rg_{me}
\end{equation*} (see \cite[\S 5]{bga} where $\Rg_\cO$ is denoted by $\widetilde {\s R}$). By construction $\Rg_\cO\simeq \prod_{i=0}^{e-1}\W_{\kk}$ as $\kk$-group schemes and
\begin{equation} \label{e.ra}
\Rg_\cO(A)=W(A)[T]/(f_\pi(T))=W(A)\otimes_{W(\kk)}\cO
\end{equation}
for any $\kk$-algebra $A$ \cite[(5.4)]{bga}; note that by \cite[Lemma 4.4]{bga} the hypothesis $A=A^p$ in \cite[(5.4)]{bga} is superfluous since $\varprojlim_{m\in \N} \Rg_{me}=\varprojlim_{n\in \N} \Rg_n$.
We will say that $\Rg_\cO$ is an
{\it $\cO$-algebra scheme} over $\spec(\kk)$ since, as a functor on affine $\kk$-schemes, it takes values on $\cO$-algebras.

Note that if $\cO= W(\kk)$, then $\Rg_\cO\simeq \mathbb W_{\kk}$, the $\kk$-scheme of $p$-typical Witt vectors. 

\section{Ramified Witt vectors} \label{s.w}
Let $\cO$ be a complete discrete valuation ring with field of fractions $K$ and {\it finite} residue field $\kk$ of cardinality $q=p^h$.

For any $\cO$-algebra $B$ one defines the $\cO$-algebra of {\it ramified Witt vectors} $W_\cO(B)$ as the set $B^{\N_0}$ endowed with a structure of $\cO$-algebra in such a way that the map
\begin{equation}\label{eq.phi}
\Phi_B\colon W_\cO(B)\to B^{\N_0},\qquad \bm b=(b_n)_{n\in \N_0}\mapsto \left(\Phi_0(\bm b),\Phi_1(\bm b), \Phi_2(\bm b), \dots \right),\end{equation}
is a homomorphism of $\cO$-algebras, where  $\Phi_n(\bm b)=b_0^{q^n}+\pi b_1^{q^{n-1}}+\dots +\pi^nb_n$ and the target $\cO$-algebra $B^{\N_0}$  is the product ring on which $\cO$ acts via multiplication in each component.  Proving the existence of  $W_\cO(B)$  with the indicated property requires some work; we refer to \cite{sch} for detailed proofs. 

Note that if $\pi$ is not a zero divisor in $B$ then $\Phi_B$  is injective and indeed bijective if $\pi$ is invertible in $B$.

The above construction provides a ring scheme (and in fact an $\cO$-algebra scheme) $\W_{\cO}$ such that $W_\cO(A)=\Hom_\cO(\spec(A),\W_\cO)=:\W_\cO(A)$ for any $\cO$-algebra $A$, together with a morphism of $\cO$-algebra schemes $\bm \Phi\colon \W_\cO\to \A^{\N_0}_\cO$ induced by the Witt polynomials
\begin{equation*}\label{e.phi}
\Phi_n=\Phi_n(X_0,\dots,X_n)=X_0^{q^n}+\pi X_1^{q^{n-1}}+\dots +\pi^nX_n;
\end{equation*}
more precisely, if $\A^{\N_0}_\cO=\spec(\cO[Z_0,Z_1,\dots]) $, and $\W_\cO=\spec( \cO[X_0,X_1,\dots])$ then $\bm \Phi^*(Z_n)=\Phi_n$.
Let ${\bm \Phi}_i\colon \W_\cO\to \A^1_\cO$ denote the composition of $\bm \Phi$ with the projection onto the $i$th factor.

It is $\W_{\Z_p}=\W\times_{\spec(\Z )} \spec(\Z_p)$, the base change of the scheme of $p$-typical Witt vectors over $\Z$,  but, despite the notation, $\W_{\cO}$ differs from $\W\times_{\spec(\Z )} \spec(\cO)$ in general.
\medskip

Let  $K'$ be a finite extension of $K$ with residue field $\kk'=\F_{q^r}$ and ring of integers $\cO'$ and let $\varpi\in \cO'$ be a fixed  uniformizing parameter. We can repeat the above constructions with $\varpi, q^r$ in place of $\pi,q$ and then get a morphism of  $\cO'$-algebra schemes $\bm{\Phi}'\colon \W_{\cO'}\to \A_{\cO'}^{\N_0}$ defined by the Witt polynomials
\begin{equation}\label{e.phip}
\Phi'_n (X_0,\dots,X_n)=X_0^{q^{rn}}+\varpi X_1^{q^{r(n-1)}}+\dots +\varpi^nX_n . \end{equation}  
 By \cite{dri}  there is a natural  morphism of  functors from the category of $\cO'$-algebras to the category of $\cO$-algebras \[\bm u=\bm u_{(\cO,\cO')}\colon \W_\cO \to \W_{\cO'}\qquad \ \quad (\text{{Drinfeld's functor}}) \] such that    for any $\cO'$-algebra $B$  the following diagram 
	\begin{equation*} 
  	\xymatrix{W_\cO(B)\ar[d]^{ \Phi_{B}}\ar[rr]^(0.5){ u} && W_{\cO'}(B)\ar[d]^{\Phi'_B}\\
		B^{\N_0}\ar[rr]^{\Pi'}&&B^{\N_0}
	} \end{equation*}
	commutes, where the upper arrow is induced by $\bm u$ on $B$-sections and  $\Pi'$   maps $(b_0,b_1,\dots)$ to $(b_0,b_r,b_{2r},\dots)$.  Further \[
	u([b])=[b], \quad  u(F^r\bm b)=F(u(\bm b)), \quad u(V\bm b)=\frac{\pi}{\varpi}V(u( F^{r-1}\bm b)),
	\] where $[\ ],F,V$  denote, respectively, the Teichm\"uller map, the Frobenius and the Verschiebung both in $W_\cO(B)$ and in $W_{\cO'}(B)$, and $F^r$ is the $r$-fold composition of $F$ with itself.
By construction Drinfeld's functor  behave well with respect to base change, i.e. if $\cO{''}/\cO'$ is another extension, then
\begin{equation}\label{e.ucomp}
\bm u_{(\cO,\cO{''}) }=\bm u_{(\cO',\cO{''})}\circ \bm u_{(\cO,\cO')} 
\end{equation}
as functors from the category of $\cO''$-algebras to the one of $\cO$-algebras.
More details on $\bm u$  and its scheme theoretic interpretation will be given in Section \ref{s.rrwv}.

\section{Perfection}
A $\kk$-scheme $X$ is called perfect if the absolute Frobenius endomorphism $F_X$ is an isomorphism. For any $\kk$-scheme $X $ one constructs its  {\it (inverse) perfection} $X^\pf$ as the inverse limit of copies of $X $ with $F_X$ as transition maps. It is known that the functor $(-)^\pf$ is right adjoint of  the forgetful functor from the category of perfect $\kk$-schemes to the category of $\kk$-schemes, i.e., if  $\rho\colon X^\pf\to X $ denotes the canonical projection, there is a bjection  
\begin{equation}\label{e.adj}
\Hom_{\kk}(Z , X ^\pf)\simeq \Hom_{\kk}(Z ,X ), \qquad f\mapsto \rho\circ f 
\end{equation}
for any perfect $\kk$-scheme $Z $; 
see \cite[Lemma 5.15 and (5.5)]{bga2} for more details on this. 

In the next sections we will need the following result.
\begin{lemma}\label{l.pf}
	Let $\psi\colon X\to Y$ be a morphism of $\kk$-schemes such  that $\psi_A\colon X(A)\to Y(A)$ is a bijection for any perfect $\kk$-algebra $A$. Then $\psi^\pf\colon X^\pf\to Y^\pf$ is an isomorphism and $\psi$ is a universal homeomorphism.
\end{lemma}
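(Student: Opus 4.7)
My plan is to establish the two conclusions in sequence, deducing the universal homeomorphism statement from the isomorphism of perfections via a standard two-out-of-three argument.

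First I would show $\psi^\pf$ is an isomorphism. Since $X^\pf$ and $Y^\pf$ are perfect and perfect $\kk$-schemes form a full subcategory of $\kk$-schemes, it suffices to exhibit $\psi^\pf$ as an isomorphism inside this subcategory. Taking $Z=\spec(A)$ in the adjunction (\ref{e.adj}) for $A$ a perfect $\kk$-algebra produces a canonical identification $X^\pf(A)=X(A)$, and likewise for $Y^\pf$, under which $\psi^\pf_A$ is precisely $\psi_A$---bijective by hypothesis. Since every perfect $\kk$-scheme is obtained by Zariski gluing of affine perfect ones, Yoneda in the category of perfect $\kk$-schemes then yields that $\psi^\pf$ is an isomorphism of perfect $\kk$-schemes, and hence of $\kk$-schemes.

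Next I would exploit the commutative square formed by $\psi$, $\psi^\pf$, and the canonical projections $\rho_X\colon X^\pf\to X$ and $\rho_Y\colon Y^\pf\to Y$. The projections $\rho_X,\rho_Y$ are universal homeomorphisms---a standard fact about perfection, since locally they arise from the integral and radicial ring maps $R\to R^\pf$---and $\psi^\pf$ is an isomorphism by the previous step. Hence $\psi\circ\rho_X=\rho_Y\circ\psi^\pf$ is a universal homeomorphism. For any base change $Y'\to Y$ the resulting map $X^\pf\times_Y Y'\to Y'$ factors as $X^\pf\times_Y Y'\to X\times_Y Y'\to Y'$, where the first arrow, being the base change of $\rho_X$, is a homeomorphism, while the entire composite is a homeomorphism. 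Cancelling the first arrow forces $\psi_{Y'}$ to be a homeomorphism; since $Y'$ was arbitrary, $\psi$ is a universal homeomorphism.

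The anticipated obstacle is the Yoneda step: one must comfortably move between the ambient category of $\kk$-schemes and its full subcategory of perfect ones, and know that perfect schemes satisfy Zariski descent on affine perfect covers so that the hypothesis (bijectivity on affine perfect test schemes) actually detects isomorphism of perfections. Once that is in hand, the rest is essentially formal cancellation of homeomorphisms under base change.
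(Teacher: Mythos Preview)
Your proposal is correct and follows the same approach as the paper: show $\psi^\pf$ is an isomorphism via the adjunction \eqref{e.adj}, then deduce that $\psi$ is a universal homeomorphism from the commutative square involving the projections $\rho$, which are themselves universal homeomorphisms. The only cosmetic difference is that the paper constructs the inverse of $\psi^\pf$ directly---as the morphism $Y^\pf \to X^\pf$ corresponding to $\id_{Y^\pf}$ under the chain $\Hom_\kk(Y^\pf, X^\pf)\simeq \Hom_\kk(Y^\pf, X)\simeq \Hom_\kk(Y^\pf, Y)\simeq \Hom_\kk(Y^\pf, Y^\pf)$---rather than phrasing this as a Yoneda argument in the category of perfect $\kk$-schemes.
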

\begin{proof}
By  hypothesis	
\begin{equation}\label{e.zrw}
\Hom_{\kk}( Z,X)\simeq  \Hom_{\kk}(Z,Y), \qquad f\mapsto  \psi\circ f,\end{equation}
for any perfect $\kk$-scheme $Z$. In particular,
\[ \Hom_{\kk}(Y^\pf ,X^\pf)\simeq \Hom_{\kk}( Y^\pf ,X)\simeq  \Hom_{\kk}(Y^\pf,Y)\simeq \Hom_{\kk}(Y^\pf,Y^\pf) , \]
where the first and third bijections follow from \eqref{e.adj} and the second from \eqref{e.zrw}.
By standard arguments the inverse of $\psi^\pf$ is then the morphism associated with the identity on $Y^\pf$ via the above bijections. 	Consider further the following commutative square \[\xymatrix{X^\pf\ar[r]_{\psi^\pf}^\sim\ar[d]^\rho&Y^\pf \ar[d]^\rho\\
	X\ar[r]_{\psi}&Y 
} \]
Since the canonical morphisms $\rho$ are   universal homeomorphisms \cite[Rem. 5.4]{bga2}, the same is  $\psi$.
\end{proof}
 
\section{Results on ramified Witt vectors}\label{s.rrwv}
In this section we study more closely ramified Witt vectors. When possible we use a scheme theoretic approach that makes evident functorial properties and shortens proofs. 
Let notation be as in Section \ref{s.w}. 

\subsection{Frobenius, Verschiebung, Teichm\"uller maps}
In this subsection we present classical constructions  in the scheme-theoretic language. Their properties  naturally descend   from \eqref{sigma} and Remark \ref{r.unique}.

Let $B$ be an $\cO$-algebra.
If $B$ admits an endomorphism of $\cO$-algebras $\sigma$ such that $\sigma(b)\equiv b^q $ mod $\pi B$ for any $b\in B$, then the image of the homomorphism $\Phi_B$ in \eqref{eq.phi} can be characterized as follows 
\begin{equation}\label{sigma}
(a_n)_{n\in \N_0}\in {\rm Im} \Phi_B \quad \Leftrightarrow \quad
\sigma(a_n)\equiv a_{n+1} \quad \text{\rm mod}\ \pi^{n+1}B\quad \forall n\in \N_{0};
\end{equation}
see 
\cite[Prop. 1.1.5]{sch}.
We will   apply  this fact to polynomial rings $\cO[T_i, i\in I]$ with $\sigma$ the endomorphism of $\cO$-algebras mapping $T_i$ to $T^q_i$. 

\begin{lemma}\label{l.equiv} Let $\sigma\colon B\to B$ be an endomorphism of $\cO$-algebras, $\varpi\in B$ an element such that $\pi\in \varpi B$ and $f\in \N$. If $\sigma(b)\equiv b^{q^f} $ {\rm mod} $\varpi B$ for any $b\in B$, then \[
\sigma(\Phi_{fn}(\bm b))\equiv \Phi_{f(n+1)}(\bm b) \quad \text{\rm mod}\ \varpi^{fn+1}B\]
for all $\bm b=(b_0,b_1,\dots)\in W_{\cO}(B)$ and $n\geq 0$.
\end{lemma}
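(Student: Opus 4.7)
The plan is to reduce the assertion to a single Frobenius-lift congruence at the level of $B$ and then carry it through the defining expansion of the Witt polynomials. The elementary \emph{key congruence} I need is the following: if $a,c\in B$ satisfy $a\equiv c\pmod{\varpi^j B}$ with $j\ge 1$, then $a^q\equiv c^q\pmod{\varpi^{j+1} B}$. This falls out of the binomial expansion of $(c+\varpi^j d)^q-c^q$: the linear term carries the factor $q=p^h$, which lies in $\pi\cO\subset\varpi B$ by the hypothesis $\pi\in\varpi B$, so contributes $\varpi^{j+1}$; the higher-order terms carry $\varpi^{kj}\subset\varpi^{j+1}$ for $k\ge 2$.

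Iterating the key congruence, starting from the given $\sigma(b)\equiv b^{q^f}\pmod{\varpi B}$, a short induction on $m\ge 0$ yields
\[
\sigma(b)^{q^m}\equiv b^{q^{f+m}}\pmod{\varpi^{m+1} B}\qquad\text{for all }b\in B.
\]
Since $\sigma$ is a ring homomorphism, the left side equals $\sigma(b^{q^m})$; multiplying by $\pi^i\in\varpi^i B$ then gives
\[
\pi^i\,\sigma(b^{q^m})\equiv \pi^i b^{q^{f+m}}\pmod{\varpi^{m+i+1} B}.
\]

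Specializing $m=fn-i$, $b=b_i$ and summing over $0\le i\le fn$, using that $\sigma$ is $\cO$-linear and fixes $\pi$, I obtain
\[
\sigma(\Phi_{fn}(\bm b))=\sum_{i=0}^{fn}\pi^i\sigma(b_i^{q^{fn-i}})\equiv \sum_{i=0}^{fn}\pi^i b_i^{q^{f(n+1)-i}}\pmod{\varpi^{fn+1} B},
\]
each individual discrepancy lying in $\varpi^{(fn-i)+1+i} B=\varpi^{fn+1} B$. The tail $\sum_{i=fn+1}^{f(n+1)}\pi^i b_i^{q^{f(n+1)-i}}$ separating the above partial sum from $\Phi_{f(n+1)}(\bm b)$ is divisible by $\pi^{fn+1}\in\varpi^{fn+1} B$, and combining the two contributions gives the claimed congruence.

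I do not expect a serious obstacle: the whole argument is elementary $p$-adic bookkeeping with the binomial theorem. The only point that requires care is tracking the gain of a power of $\varpi$ obtained by raising to the $q$-th power against the power absorbed by the factor $\pi^i$; the balance $i+(fn-i+1)=fn+1$ is precisely what makes the conclusion come out uniformly in $i$ with the modulus $\varpi^{fn+1}B$ predicted by the statement.
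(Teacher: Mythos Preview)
Your argument is correct and follows essentially the same route as the paper: both proofs isolate the elementary congruence $\sigma(b^{q^m})\equiv b^{q^{f+m}}\pmod{\varpi^{m+1}B}$ (which the paper cites from \cite{sch} while you derive it from the binomial expansion), apply it term by term to $\sigma(\Phi_{fn}(\bm b))$, and then absorb the tail $\sum_{i=fn+1}^{f(n+1)}\pi^i b_i^{q^{f(n+1)-i}}$ into $\pi^{fn+1}B\subset\varpi^{fn+1}B$. The only cosmetic difference is that the paper names the intermediate partial sum $\Phi_{fn}(\bm b^{q^f})$ whereas you write it out explicitly.
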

\begin{proof} 
	Let $\bm b=(b_0,b_1,\dots)$. 
Since $\Phi_{f(n+1)}(\bm b)\equiv \Phi_{fn}(b_0^{q^f}, b_1^{q^f},\dots) $ mod $\pi^{nf+1}B$, we are left to prove that
\[\sigma(\Phi_{fn}(\bm b))\equiv \Phi_{fn}(b_0^{q^f}, b_1^{q^f},\dots)\quad \text{\rm mod}\ \varpi^{fn+1}B.
\]
We first note that $\sigma(b)\equiv b^{q^f}$ mod $\varpi B$ implies that
\begin{equation}\label{e.equiv}
\sigma(b^{q^{s}})\equiv b^{q^{f+s}} \quad \text{\rm mod}\ \varpi^{s+1} , \quad \forall s\geq 0,
\end{equation}
(cf. \cite[Lemma 1.1.1]{sch}). 
Hence by \eqref{e.equiv}  
\begin{multline*}
\sigma(\Phi_{fn}(\bm b))=
\sigma\left(b_0^{q^{fn}}+\pi b_1^{q^{fn-1}}+\dots \pi^{fn}b_{fn}\right)=\\
\sigma(b_0)^{q^{fn}}+\pi\sigma(b_1)^{q^{fn-1}}+\dots \pi^{fn}\sigma(b_{fn})\equiv\\
b_0^{q^{(n+1)f}}+\pi b_1^{q^{f(n+1)-1}}+\dots +\pi^{fn}b_{fn}^{q^{f}}
= \Phi_{fn}(\bm b^{q^f}),
\end{multline*}
where the equivalence  holds modulo $\varpi^{fn+1}B$.
\end{proof}

Let $B$ be an $\cO$-algebra, $\sigma\colon B\to B$ an endomorphism  of $\cO$-algebras such that $\sigma(b)\equiv b^q$ modulo $\pi B$, and let $h\colon \spec(B)\to \A^{\N_0}_{\cO}=\spec (\cO[Z_0,Z_1,\dots])$ be a morphism of $\cO$-schemes; the latter is uniquely determined by $(h_0,\dots)\in B^{\N_0}$  with $h_i=h^*(Z_i)$. The morphism $h$ factors through $\bm \Phi\colon \W_\cO\to \A_\cO^{\N_0}$ if and only if  $(h_0,h_1,\dots)\in {\rm Im} \Phi_B$.
Hence   we can rephrase \eqref{sigma} as follows:
\begin{equation}\label{sigma2}
h \text{ factors through } \bm\Phi \Leftrightarrow 
\sigma(h^*(Z_n))\equiv h^*(Z_{n+1}) \quad \text{\rm mod}\ \pi^{n+1}B\quad \forall n\in \N_{0}.
\end{equation} 

\begin{remarks}\label{r.unique}
\begin{itemize}
\item[a)] Note that if $\pi$ is not a zero divisor in $B$ and $h$ factors through $\bm \Phi$, then it factors uniquely. Indeed, let $g,g'\colon \spec(B)\to \W_{\cO}$ be such that $\bm\Phi \circ g=h=\bm\Phi\circ g'$ and let $\bm b,\bm b'\in W_\cO(B)=\W_\cO(B)$ be the corresponding sections. Then $\Phi_B(\bm b)=\Phi_B(\bm b')$ and one concludes that $g=g'$ by the injectivity of $\Phi_B$ \cite[Lemma 1.1.3]{sch}.
\item[b)] Since the above constructions depend on $\pi$, it seems that one should write  $\bm \Phi_\pi$ and $\W_{\cO,\pi}$ above. However, if $\varpi$ is another uniformizing parameter of $\cO$, let $\sigma$ be the $\cO$-algebra endomorphism on $B=\cO[X_0,X_1,\dots]$ mapping $X_i$ to $X_i^q$. Then $\sigma(\Phi_{\varpi,n}(X_.))\equiv \Phi_{\varpi,n+1}(X_.))$ modulo $\varpi^{n+1}B=\pi^{n+1}B$; hence  by \eqref{sigma2} and a) one deduces the existence of a unique 
morphism $h_{\pi,\varpi}\colon \W_{\cO,\pi}\to \W_{\cO,\varpi}$ such that $\bm \Phi_\pi=\bm \Phi_{\varpi}\circ h_{\pi,\varpi}$. Similarly one constructs $h_{\varpi,\pi}\colon  \W_{\cO,\varpi}\to \W_{\cO,\pi}$ and a) implies that $h_{\varpi,\pi}\circ h_{\pi,\varpi}$ and $h_{\pi,\varpi}\circ h_{\varpi,\pi}$ are the identity morphisms.
\item[c)] Note that if 
 $ h\colon \G_\cO\to \A^{\N_0}_\cO$ is a morphism of   group (respectively, ring) schemes with $\G_\cO\simeq \A_\cO^{\N_0}$ or $\G_\cO\simeq \A_\cO^{m}$ as   schemes, and there exists a morphism $g\colon \mathbb G_\cO\to \W_\cO$, unique by point a), such that $h=\bm \Phi\circ g$, then $g$ is a morphism of group (respectively, ring) schemes. Indeed let $\mu_{\G},\mu_\W, \mu_\A$ be the group law on $\G_\cO$, $\W_\cO$ and $\A_\cO^{\N_0}$ respectively. Since $\G_\cO\times_\cO\G_\cO=\spec(C)$ with $C$ reduced, in order to prove that $g\circ\mu_\G=\mu_\W\circ (g\times g)\colon \G_\cO\times_\cO\G_\cO\to \W_\cO$, it suffices to prove that $\bm \Phi\circ g\circ\mu_\G=\bm \Phi\circ \mu_\W\circ (g\times g)$. Now 
$\bm \Phi\circ g\circ\mu_\G=h\circ \mu_\G=
 \mu_\A\circ(h\times h)=\mu_\A\circ (\bm \Phi\times \bm \Phi)\circ (g\times g)= \bm \Phi\circ \mu_\W\circ (g\times g)$. Similar arguments work for the multiplication law when considering morphisms of ring schemes.
\end{itemize}
\end{remarks}
As applications of \eqref{sigma} and \eqref{sigma2} one proves the existence of the Frobenius, Verschiebung and Teichm\"{u}ller morphisms  as well as of endomorphisms $\bm \lambda\colon \W_\cO\to \W_\cO$ for any $\lambda\in \cO$. 

We now see how to deduce the existence of classical group/ring endomorphisms of $\W_\cO$ from endomorphisms of $\A^{\N_0}_\cO$.

\begin{proposition} \label{pro.fvlt}
\begin{itemize}
\item[i)] Let $f$ be the endomorphism of $\A^{\N_0}=\spec (\cO[Z_0,Z_1,\dots])$ such that $f^*(Z_n)=Z_{n+1}$. There exists a unique morphism of ring schemes  $\bm F\colon\W_\cO\to \W_\cO $   such that $\bm \Phi\circ \bm F=f\circ \bm \Phi$.
\item[ii)]  Let $v$ be the endomorphism of $\A^{\N_0}=\spec (\cO[Z_0,Z_1,\dots])$ such that $v^*(Z_0)=0$ and $v^*(Z_{n+1})=\pi Z_n$ for $n\geq 0$. Then there exists a unique  morphism  of     $\cO$-group schemes  $\bm V\colon\W_\cO\to \W_\cO $ such that $\bm \Phi\circ \bm V=v\circ \bm \Phi$.   
\item[iii)] For $\lambda\in \cO$ let $f_\lambda$ be the group endomorphism of $\A^{\N_0}_\cO=\spec (\cO[Z_0,\dots])$ such that $f^*_\lambda(Z_n)=\lambda Z_{n}$. Then there exists  a unique morphism  of $\cO$-group schemes  $\bm \lambda\colon\W_\cO\to \W_\cO $ such that $\bm \Phi\circ \bm \lambda=f_\lambda\circ \bm \Phi$.
\item[iv)] 
Let $\sigma\colon \A^1_\cO\to \A^1_\cO=\spec(\cO[T])$ be the morphism of $\cO$-schemes such that $\sigma^*(T)= T^q$  and let $\bm \sigma=(id, \sigma,\sigma^2,\dots) \colon \A^1_\cO\to \A^{\N_0}_\cO$.
Then there exists a unique morphism of $\cO$-schemes $\bm \tau\colon \A^1_{\cO}\to \W_\cO$ such that $\bm \Phi\circ \bm \tau=\bm \sigma$. It is a multiplicative section of the projection onto the first component $\bm \Phi_0\colon \W_\cO\to \A^1_\cO$. 
\end{itemize}
	
\end{proposition}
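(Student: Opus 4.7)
The plan is to treat all four parts by a common template. In each case I write down an explicit morphism $h \colon S \to \A^{\N_0}_\cO$ from the indicated source scheme $S$ and use the criterion \eqref{sigma2} to lift $h$ uniquely through $\bm \Phi$ to a morphism $S \to \W_\cO$. Because the coordinate ring of $S$ is a polynomial $\cO$-algebra in each case, $\pi$ is not a zero-divisor, so Remark \ref{r.unique}(a) gives uniqueness of the lift, and Remark \ref{r.unique}(c) transfers the additive / multiplicative / ring structure from $h$ on $\A^{\N_0}_\cO$ (product ring) to the lift on $\W_\cO$.

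For (i), (ii) and (iii) I take $S = \W_\cO = \spec(\cO[X_0,X_1,\dots])$ with the $\cO$-algebra endomorphism $\sigma(X_i) = X_i^q$, and set $h = f\circ \bm\Phi$, $h = v\circ \bm\Phi$, $h = f_\lambda\circ \bm\Phi$ respectively. Substituting the definitions of $f$, $v$, $f_\lambda$, the congruence required by \eqref{sigma2} reduces in all three cases to the single identity $\sigma(\Phi_n) \equiv \Phi_{n+1} \pmod{\pi^{n+1}}$, which is exactly Lemma \ref{l.equiv} with $f=1$ and $\varpi=\pi$; in the Verschiebung case one first cancels a factor of $\pi$, which is legitimate by $\pi$-torsion-freeness. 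On $\A^{\N_0}_\cO$ the morphism $f$ is a ring endomorphism (for the product ring structure), whereas $v$ and $f_\lambda$ are only $\cO$-linear group endomorphisms; Remark \ref{r.unique}(c) accordingly upgrades $\bm F$ to a morphism of ring schemes and $\bm V$, $\bm\lambda$ to morphisms of $\cO$-group schemes.

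For (iv) I take $S = \A^1_\cO = \spec(\cO[T])$ with $\sigma(T) = T^q$, and $h = \bm\sigma$. Here $\bm\sigma^*(Z_n) = T^{q^n}$, so $\sigma(\bm\sigma^*(Z_n)) = T^{q^{n+1}} = \bm\sigma^*(Z_{n+1})$ as an equality, and \eqref{sigma2} holds trivially, yielding a unique $\bm\tau$. On points $\bm\sigma$ sends $a\mapsto (a,a^q,a^{q^2},\dots)$, which is multiplicative in the product ring; the multiplicative analogue of Remark \ref{r.unique}(c) then makes $\bm\tau$ multiplicative. The section property $\bm\Phi_0\circ\bm\tau = \id$ is immediate from $\bm\sigma^*(Z_0) = T$.

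There is no serious obstacle: the main work has already been done in setting up the criterion \eqref{sigma2}, Lemma \ref{l.equiv}, and Remark \ref{r.unique}. The only point requiring a little care is keeping track of which of $f,v,f_\lambda,\bm\sigma$ respects which algebraic structure on $\A^{\N_0}_\cO$, so that Remark \ref{r.unique}(c) produces the correct kind of morphism in each of the four parts.
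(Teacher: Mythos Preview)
Your proposal is correct and follows essentially the same approach as the paper: reduce existence to the criterion \eqref{sigma2} on the polynomial coordinate ring, obtain uniqueness from Remark \ref{r.unique}(a), and upgrade to the appropriate group/ring structure via Remark \ref{r.unique}(c). The only cosmetic difference is that the paper verifies the congruences for $\bm F$ and $\bm V$ by the one-line identity $\Phi_{n+1}(\bm X)=\Phi_n(\bm X^\sigma)+\pi^{n+1}X_{n+1}$ rather than invoking Lemma \ref{l.equiv} uniformly as you do; both arguments are equivalent.
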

\begin{proof}
For proving i)-iii) we use \eqref{sigma2} with $B=\cO[X_0,X_1,\dots]$, the ring  of global sections of $\W_\cO$,  endowed with its unique lifting of  Frobenius, more precisely with the morphism  of $\cO$-algebras  $\sigma$ mapping $X_i$ to $X_i^q$. Let $\bm X$ denote   the vector  $(X_0,X_1,\dots)\in W_\cO( \cO[X_0,X_1,\dots])$ and   and set $ {\bm X}^\sigma=(X_0^q,X_1^q,\dots)$. 

The morphism $\bm F$ exists as soon as the condition in \eqref{sigma2} is satisfied for $h= f\circ\bm \Phi$ , i.e., if  $\Phi_{n+1}({\bm X}^\sigma)\equiv \Phi_{n+2}(\bm X)$ modulo $\pi^{n+1}$ for any $n$. This is evident since $\Phi_{n+2}(\bm X)=\Phi_{n+1}({\bm X}^\sigma)+\pi^{n+2}X_{n+2}$.

The morphism $\bm V$ exists as soon as the condition in \eqref{sigma2} is satisfies for $h=v\circ \bm \Phi$, i.e., if $0\equiv \pi X_0$ modulo $\pi B$ and $\pi\Phi_{n-1}({\bm X}^\sigma)\equiv \pi\Phi_{n}(\bm X)$ modulo $\pi^{n+1}B$ for any $n\geq 1$. The first fact is trivial while the second is evident since $\Phi_{n}({\bm X}^\sigma)=\Phi_{n-1}(\bm X^q)+\pi^{n}X_{n}$.

The morphism $\bm \lambda$ exists as soon as
the condition in \eqref{sigma2} is satisfies for $h=f_\lambda \circ \bm \Phi$, i.e., if 
 $\lambda\Phi_{n}({\bm X}^\sigma)\equiv \lambda\Phi_{n+1}(\bm X)$ modulo $\pi^{n+1}$ for any $n$. This is evident since $\Phi_{n}({\bm X}^\sigma)\equiv \Phi_{n+1}(\bm X)$ modulo $\pi^{n+1}$  by Lemma \ref{l.equiv} with $f=1, \varpi=\pi$.

Uniqueness of $\bm F, \bm V, \bm \lambda$ follows  by Remark \ref{r.unique} a). The fact that they are group/ring scheme morphisms  follows  by Remark \ref{r.unique}   c).

For iv),   we consider  condition \eqref{sigma2} for $B=\cO[T]$ and $h=\bm \sigma$. It is satisfied since  $h^*(Z_n)=T^{q^n}$; whence $\bm\tau$ exists.  Uniqueness follows again by Remark \ref{r.unique} a) and multiplicativity of $\bm \tau$  follows from multiplicativity of $\bm \sigma$  as in Remark \ref{r.unique} c). Finally, by construction, $\bm \tau$ is a section of $\bm \Phi_0$.
\end{proof}

The ring scheme endomorphism $\bm F$ is called  {\it Frobenius} and the $\cO$-scheme endomorphism $\bm V$ is called {\it Verschiebung}.
By a direct computation one checks that for any $\cO$-algebra $A$, the induced homomorphism $F_A\colon W_\cO(A)\to W_\cO(A)$ satisfies 
\begin{equation}\label{e.Fp}
F_A(a_0,a_1,\dots)\equiv (a_0,a_1,\dots)^q\quad (\text{mod } \pi W_\cO(A)),
\end{equation}
and, if $A$ is a $\kk$-algebra, 
\begin{equation}\label{e.Fp2}
F_A(a_0,a_1,\dots)= (a_0^q,a_1^q,\dots) 
\end{equation}
holds. Further, both ${ F}_{\!A}$ and ${V}_{\!A}$ are $\cO$-linear \cite[Sect. 1]{sch} and 
\begin{eqnarray}
& & F_AV_A=\pi\cdot {\id_{W_\cO(A)}},\label{e.FVVF} \\
& & V_AF_A= \pi\cdot {\id_{W_\cO(A)}}, \qquad \text{ if } \pi A=0, \label{e.FVVF2}\\
\nonumber & &\bm a 	\cdot V_A(\bm c)=V_A(F_A(\bm a)\cdot \bm c), \quad \text{for all}\quad \bm a,\bm c\in W_\cO(A). 
\end{eqnarray}
Finally  ${V}_{\!A}^nW_\cO(A)$ is an ideal of $W_\cO(A)$ for any $n>0$ where ${V}_{\!A}^n$ denotes the $n$-fold composition of ${V}_{\!A}$.
Note that $W_\cO(A)=\varprojlim_{n\in \N} W_\cO(A)/{V}_{\!A}^nW_\cO(A)$ and if $A$ is a semiperfect $\kk$-algebra, i.e., the Frobenius is surjective on $A$, then ${V}_{\!A}^nW_\cO(A)=\pi^n W_\cO(A)$. 

The morphism $\bm\tau$ is called {\it Teichm\"{u}ller map}. For any $\cO$-algebra $B$, we have $\tau_B\colon B\to W_\cO(B), b\mapsto [b]:=(b,0,0,\dots)$, since $\Phi_B([b])=(b,b^q,b^{q^2},\dots)$.
Note that $\bm \sigma$ is not a morphism of $\cO$-group schemes and hence we can not expect that $\bm\tau$ is a morphism of group schemes.

\begin{remark}\label{r.glue}
For any subset $I \subset \N_0$ and any $\cO$-algebra $A$, let $W_{\cO,I}(A)$ denote the subset of $W_{\cO}(A)$ consisting of vectors $\bm b=(b_0,\dots)$ such that $b_i=0$ if $i\notin I$. 
If $J\subset \N_0$ satisfies $I\cap J=\varnothing$, then the sum in $W_\cO(A)$ of a vector $\bm b= (b_0,\dots)\in W_{\cO,I}(A)$ and a vector  $\bm c= (c_0,\dots)\in W_{\cO,J}(A)$ is simply obtained by "gluing" the two vectors, i.e., $\bm b+\bm c=\bm d=(d_0,\dots)\in W_{\cO,I\cup J}(A)$ with $d_i=b_i$ if $i\in I$ and $d_i=c_i$ if $i\in J$. For proving this fact, since $A$ can be written as quotient of a polynomial algebra over $\cO$ with possibly infinitely many indeterminates, we may assume that $A$ is $\pi$-torsion free. In this case $\bm d$ is uniquely determined by the condition  $\sum_{i=0}^{n}\pi^id_i^{q^{n-i}}=\Phi_n(d_0,\dots,)=\Phi_n(b_0,\dots)+\Phi_n(c_0,\dots) =\sum_{i=0}^{n}\pi^ib_i^{q^{n-i}}+\sum_{i=0}^{n}\pi^ic_i^{q^{n-i}}$; since for any index $i$ either $b_i$ or $c_i$ (or both) is zero, the above choice of $d_i$ works.
More generally, if $I_0,\dots, I_r$, are disjoint subsets of $\N_0$, and $\bm b_j$ are vectors in $W_{\cO,I_j}(A)$, then the sum $\bm b_0+\dots +\bm b_r$ is obtained by "gluing" the vectors $\bm b_j$. 
As immediate consequence, 
any element in $ W_\cO(A)$ can be written as 
\begin{equation}\label{eq.va}
(a_0,a_1,\dots)=\sum_{i=0}^\infty V_A^i[a_i], 
\end{equation} 
since $V^i[b] =(0,\dots,0,b,0,\dots)\in W_{\cO,\{i\}}$.\end{remark}
 
\begin{lemma}\label{l.wn}
Let $B$ be a $\kk$-algebra and consider the map
\[ B^n\to W_{\cO,n}(B):= W_\cO(B)/V^n_BW_\cO(B),\qquad (b_0,\dots,b_{n-1})\mapsto \sum_{j=0}^{n-1} [b_j]\pi^j.
\] 
 If $B$ is reduced (respectively, semiperfect, perfect) the above map is injective (respectively, surjective, bijective).
Hence if $B$ is semiperfect (respectively, perfect), any element of $W_\cO(B)=\varprojlim W_{\cO,n}(B)$
can be written (respectively, uniquely written) in the form $\sum_{j=0}^\infty [b_j]\pi^j$.
\end{lemma}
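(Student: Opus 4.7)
The plan is to make the map concrete by rewriting $\pi^j[b]$ as a pure Verschiebung using $\pi=VF$ (valid in a $\kk$-algebra by \eqref{e.FVVF2}), and then to identify $W_{\cO,n}(B)$ with $B^n$ as a set via the gluing formula of Remark \ref{r.glue}; the lemma then reduces to a statement about coordinate-wise Frobenius on $B$.

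First I would establish the identity $\pi^j[b]=V^j[b^{q^j}]$ in $W_\cO(B)$ for $b\in B$ and $j\geq 0$. The base case $j=1$ reads $\pi[b]=VF[b]=V[b^q]$, using $F[b]=[b^q]$, which is \eqref{e.Fp2} applied to the Teichm\"uller vector $[b]=(b,0,0,\dots)$. For the inductive step, $\cO$-linearity of $V$ gives $\pi V^j[b^{q^j}] = V^j(\pi[b^{q^j}]) = V^{j+1}[b^{q^{j+1}}]$. Invoking Remark \ref{r.glue} to glue vectors of pairwise disjoint supports yields
\[
\sum_{j=0}^{n-1}[b_j]\pi^j \;=\; \sum_{j=0}^{n-1}V^j[b_j^{q^j}] \;=\; (b_0,b_1^q,b_2^{q^2},\dots,b_{n-1}^{q^{n-1}},0,0,\dots).
\]

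Next, by \eqref{eq.va}, the projection $W_\cO(B)\to W_{\cO,n}(B)$ induces a set-bijection $B^n\simeq W_{\cO,n}(B)$ sending $(a_0,\dots,a_{n-1}) \mapsto \sum_{i=0}^{n-1} V^i[a_i]$, since $V^nW_\cO(B)$ is precisely the set of vectors whose first $n$ coordinates vanish. Under this identification the map of the lemma becomes
\[
(b_0,\dots,b_{n-1})\longmapsto (b_0,b_1^q,b_2^{q^2},\dots,b_{n-1}^{q^{n-1}}),
\]
so the three assertions of the first part fall out at once: the coordinate map $b_j\mapsto b_j^{q^j}$ is injective when $B$ is reduced, surjective when $B$ is semiperfect, and bijective when $B$ is perfect.

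For the final claim, assume $B$ is semiperfect, so that $V^nW_\cO(B)=\pi^n W_\cO(B)$ and hence $W_\cO(B)=\varprojlim W_\cO(B)/\pi^n W_\cO(B)$ is $\pi$-adically complete. Given $\bm a\in W_\cO(B)$ with first coordinate $b_0$, the difference $\bm a-[b_0]$ has vanishing zeroth coordinate and therefore lies in $VW_\cO(B)=\pi W_\cO(B)$, so $\bm a = [b_0] + \pi \bm a_1$ for some $\bm a_1$. Iterating produces a $\pi$-adically convergent expansion $\bm a = \sum_{j\geq 0}[b_j]\pi^j$. If $B$ is moreover perfect, uniqueness of the coefficients $b_j$ follows from the finite-level bijectivity already proved. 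The only mildly delicate point is the identity $\pi^j[b]=V^j[b^{q^j}]$, which crucially uses $VF=\pi$ in characteristic $p$ rather than only the universal relation $FV=\pi$.
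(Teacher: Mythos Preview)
Your proof is correct and follows essentially the same route as the paper: both compute $\sum_j[b_j]\pi^j=(b_0,b_1^q,\dots,b_{n-1}^{q^{n-1}},0,\dots)$ via $\pi=VF$ and Remark \ref{r.glue}, then read off injectivity/surjectivity from the behaviour of Frobenius on $B$. The only cosmetic difference is that the paper writes $\pi^j=V^jF^j$ in one stroke (using that $FV=VF=\pi$ over a $\kk$-algebra) rather than your induction, and for the infinite expansion the paper simply exhibits a preimage $\sum_i\pi^i[b_i^{1/q^i}]$ at each finite level rather than running your iterative $\pi$-adic argument; these are equivalent.
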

\begin{proof}
By \eqref{e.FVVF}, \eqref{e.FVVF2}, \eqref{e.Fp} and Remark \ref{r.glue} it is \[\sum_{j=0}^{n-1} [b_j]\pi^j=\sum_{j=0}^{n-1} \pi^j[b_j]=\sum_{j=0}^{n-1} V^jF^j[b_j] =\sum_{j=0}^{n-1} V^j [b_j^{q^j}]=(b_0,\dots, b_{n-1}^{q^{n-1}},0,\dots),
\] where we have omitted the subscript $B$ on $F$ and $V$. Injectivity is clear when $B$ is reduced. Assume now $B$ semiperfect and let $\bm b=(b_0,b_1,\dots )\in W_\cO(B)$. Then by Remark  \ref{r.glue} 
\[\bm b=(b_0,\dots,b_{n-1},0,0,\dots)+(0,\dots,0,b_n,\dots)\in (b_0,\dots,b_{n-1},0,0,\dots)+V^nW_{\cO}(B),
\] and by \eqref{eq.va} \& \eqref{e.Fp2}
\[
(b_0,\dots,b_{n-1},0,\dots)=\sum_{j=0}^{n-1}V^i[b_i]=\sum_{j=0}^{n-1}V^iF^i[b_i^{1/q^i}]=\sum_{j=0}^{n-1}\pi^i[b_i^{1/q^i}]
\] where $b_i^{1/q^i}$ denotes any $q^i$th root of $b_i$, which exists since $B$ is semiperfect. Hence surjectivity is clear too.
\end{proof}

\subsection{The Drinfeld morphism} 
Let $K'$ denote a finite extension of $K$ with residue field $\kk'=\F_{q^r}$, ring of integers $\cO'$ and ramification degree $e$; since we don't work with absolute ramification indices in this section, there is no risk of confusion with notation of Section \ref{s.ga}.
Let $\varpi\in \cO'$ be a uniformizing parameter and write $\pi=\alpha\varpi^e$ with $\alpha$ a unit in $\cO'$. Let 
$\Phi'_n (X_0,\dots,X_n)=X_0^{q^{rn}}+\varpi X_1^{q^{r(n-1)}}+\dots +\varpi^{n}X_n$ be  the polynomials as in \eqref{e.phip} that define the morphism $\bm \Phi'\colon \W_{\cO'}\to \A^{\N_0}_{\cO'}$.

\begin{proposition}\label{prop.u} 
There exists a unique morphism  of $\cO'$-ring schemes
$\bm u=\bm u_{(\cO,\cO')} $ such that the following diagram
\begin{equation}\label{d.square}
\xymatrix{\W_\cO\times_\cO \spec\, \cO'\ar[d]^{\bm \Phi\times {\rm id}_{\cO'}}\ar[rr]^(0.6){\bm u} && \W_{\cO'}\ar[d]^{\bm{\Phi}'}\\
	\A^{\N_0}_{\cO'}\ar[rr]^{\Pi'}&&\A^{\N_0}_{\cO'}
}\end{equation}
commutes, where $\bm \Phi\times {\rm id_{\cO'}}$ is the base change of $\bm\Phi$ to $\spec(\cO')$ and $\Pi'$ is the morphism mapping $(x_0,x_1,\dots)$ to $(x_0,x_r,x_{2r},\dots)$. 
For any $\lambda\in \cO$ it is $\bm \lambda\circ \bm u=\bm u \circ( \bm \lambda\times {\rm id}_{\cO'})$, i.e., $\bm u$ induces   homomorphisms of $\cO$-algebras $u_B\colon W_\cO(B)\to W_{\cO'}(B)$ for any $\cO'$-algebra $B$.
\end{proposition}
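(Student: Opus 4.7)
The plan is to invoke the factorization criterion \eqref{sigma2} (in the version for $\bm\Phi'$) on the composite
\[
h := \Pi' \circ (\bm\Phi \times \id_{\cO'})\colon \W_\cO \times_\cO \spec(\cO') \longrightarrow \A^{\N_0}_{\cO'}.
\]
First I would identify the coordinate ring $B' = \cO'[X_0, X_1, \dots]$ of the source with the $\cO'$-algebra endomorphism $\sigma$ of $B'$ sending $X_i \mapsto X_i^q$. Since $(\bm\Phi \times \id_{\cO'})^*(Z_n) = \Phi_n(\bm X)$ and $\Pi'^*(Z_n) = Z_{rn}$, one reads off $h^*(Z_n) = \Phi_{rn}(\bm X)$.

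Next, to apply the criterion for $\bm\Phi'$ — whose relevant data are $\varpi$ and the $q^r$-power Frobenius — I take $\sigma^r$ as test endomorphism. Both $\sigma^r$ and the $q^r$-power map reduce modulo $\varpi$ to ring endomorphisms of $\kk'[X_0, X_1, \dots]$ sending $X_i \mapsto X_i^{q^r}$ and fixing $\kk' = \F_{q^r}$ (Fermat's little theorem); hence they coincide, i.e.\ $\sigma^r(b) \equiv b^{q^r}$ mod $\varpi B'$ for every $b \in B'$. Lemma \ref{l.equiv} with $f = r$ then gives
\[
\sigma^r(\Phi_{rn}(\bm X)) \equiv \Phi_{r(n+1)}(\bm X) \pmod{\varpi^{rn+1} B'},
\]
and a fortiori modulo $\varpi^{n+1} B'$. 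The analogue of \eqref{sigma2} for $\bm\Phi'$ therefore produces a morphism of $\cO'$-schemes $\bm u$ with $\bm\Phi' \circ \bm u = h$, and uniqueness follows from Remark \ref{r.unique}(a), since $\varpi$ is a non-zero-divisor in $B'$.

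For the remaining claims: $\Pi'$ is componentwise projection, so $h$ is a morphism of ring schemes, and Remark \ref{r.unique}(c) upgrades $\bm u$ to a ring-scheme morphism. To prove $\bm\lambda \circ \bm u = \bm u \circ (\bm\lambda \times \id_{\cO'})$ for $\lambda \in \cO$, I would compose both sides with $\bm\Phi'$. Using Proposition \ref{pro.fvlt}(iii) applied both to $\W_\cO$ and to $\W_{\cO'}$, together with the obvious identity $\Pi' \circ f_\lambda = f_\lambda \circ \Pi'$, both composites become $f_\lambda \circ h$; uniqueness from Remark \ref{r.unique}(a) then forces the desired equality, and this is precisely the $\cO$-linearity of $u_B$.

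The only delicate point will be reconciling the two Witt-vector conventions — parameters $\pi$ versus $\varpi$, residue cardinalities $q$ versus $q^r$ — inside the single polynomial ring $B'$. Lemma \ref{l.equiv} was tailored for exactly this mismatch, so once the reduction-mod-$\varpi$ identification of $\sigma^r$ with the $q^r$-Frobenius on $\kk'[X_0, X_1, \dots]$ is in place, the rest is essentially bookkeeping.
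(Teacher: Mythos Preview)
Your proof is correct and follows essentially the same route as the paper: apply the factorization criterion \eqref{sigma2} (in its $\bm\Phi'$-version) to $h=\Pi'\circ(\bm\Phi\times\id_{\cO'})$, verify the needed congruence via Lemma~\ref{l.equiv} with $f=r$, and conclude uniqueness and the ring/$\cO$-linearity properties from Remark~\ref{r.unique}. The only cosmetic difference is that you introduce $\sigma$ as the $q$-lift and then work with $\sigma^r$, whereas the paper directly takes $\sigma\colon X_i\mapsto X_i^{q^r}$; these are the same endomorphism, and your explicit check that $\sigma^r$ reduces to the $q^r$-Frobenius on $\kk'[X_0,X_1,\dots]$ just makes visible a hypothesis the paper leaves implicit.
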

\begin{proof}(Cf. \cite[Prop. 1.2]{dri}.)
Let $B=\cO'[X_0,\dots] $ be the ring of global sections of $\W_{\cO}\times_\cO \spec \cO'$ and let $\sigma $ be the endomorphism the $\cO'$-algebra $B$ mapping $X_i$ to $X_i^{q^r}$. Let $ h=\Pi'\circ (\bm\Phi\times {\rm id_{\cO'}})\colon \spec(B)\to \A^{\N_0}_{\cO'}$. Then by \eqref{sigma2} the morphism of $\cO'$-schemes $\bm u$ exists as soon as $\sigma(h^*(Z_n))\equiv h^*(Z_{n+1})$ modulo $ \varpi^{n+1}B$.
By definition of $h$, this condition is equivalent to $\sigma(\Phi_{nr} )\equiv\Phi_{(n+1)r} $ modulo $ \varpi^{n+1}B$, and the latter holds by Lemma \ref{l.equiv} with $f=r$ and $\bm b=(X_0,X_1,\dots)\in W_ {\cO'}(B)$.
Hence $\bm u$ exists as morphism of schemes. Uniqueness follows by Remark \ref{r.unique} a). 
Since $\bm \Phi\times {\rm id}_{\cO'}$ and $\Pi'$ are morphism of ring schemes, the same is $\bm u$ by the commutativity of \eqref{d.square} and Remark \ref{r.unique} c).

Finally, since both $\bm \lambda\circ \bm u$ and $\bm u \circ ( \bm \lambda\times {\rm id}_{\cO'})$ correspond to the endomorphism of $	\A^{\N_0}_{\cO'}$ mapping $Z_n$ to $\lambda Z_{rn}$ on algebras, the result is clear.
\end{proof}

The morphism $\bm u$ is called the {\it Drinfeld morphism}. Note that the commutativity of \eqref{d.square} says that for any $\cO'$-algebra $B$ and any $\bm b\in W_\cO(B)$ it is
\[\Phi_{n}'(u_B(\bm b))=\Phi_{nr}(\bm b).
\]

\begin{lemma} \label{l.utau}
Let $\bm \tau, \bm \tau'$ be the Teichm\"{u}ller maps of $\W_\cO, \W_{\cO'}$ respectively. Then
$\bm \tau'=\bm u\circ (\bm \tau\times {\rm id}_{\cO'})$.
\end{lemma}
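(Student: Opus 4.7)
The plan is to reduce the identity to a calculation on ghost coordinates and invoke the uniqueness built into the construction of the Teichmüller map.

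First I would note that the target of $\bm \tau'$ is $\W_{\cO'}$, and that $\bm \tau'$ is uniquely characterised (by Remark \ref{r.unique} a), applicable because $\varpi$ is not a zero divisor in $\cO'[T]$) as the unique morphism $\A^1_{\cO'} \to \W_{\cO'}$ whose composition with $\bm \Phi'$ equals $\bm \sigma' = (\mathrm{id}, \sigma', (\sigma')^2, \dots)$, where $(\sigma')^*(T) = T^{q^r}$. Therefore it suffices to verify
\[
\bm \Phi' \circ \bm u \circ (\bm \tau \times \mathrm{id}_{\cO'}) = \bm \sigma'.
\]

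Next I would chase the square \eqref{d.square}. Using $\bm \Phi' \circ \bm u = \Pi' \circ (\bm \Phi \times \mathrm{id}_{\cO'})$, we get
\[
\bm \Phi' \circ \bm u \circ (\bm \tau \times \mathrm{id}_{\cO'}) = \Pi' \circ (\bm \Phi \circ \bm \tau) \times \mathrm{id}_{\cO'} = \Pi' \circ (\bm \sigma \times \mathrm{id}_{\cO'}),
\]
by Proposition \ref{pro.fvlt} iv). Now $\bm \sigma = (\mathrm{id}, \sigma, \sigma^2, \dots)$ with $\sigma^*(T) = T^q$, and $\Pi'$ projects onto the coordinates with index divisible by $r$, so the right-hand side equals $(\mathrm{id}, \sigma^r, \sigma^{2r}, \dots)$ on $\A^1_{\cO'}$. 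Since $\sigma^r$ sends $T$ to $T^{q^r}$, this is exactly $\bm \sigma'$, and the desired equality follows.

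I do not expect a real obstacle here: the only thing to be slightly careful about is that the uniqueness invoked from Remark \ref{r.unique} a) requires $\varpi$ not to be a zero divisor in the coordinate ring of the source of $\bm \tau'$, namely $\cO'[T]$, which is immediate. Everything else is a direct diagram chase through \eqref{d.square} and the defining property of $\bm \tau$.
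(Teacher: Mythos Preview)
Your proof is correct and follows essentially the same route as the paper: both reduce to checking $\bm\Phi'\circ\bm u\circ(\bm\tau\times\mathrm{id}_{\cO'})=\bm\sigma'$ via the square \eqref{d.square} and the defining property $\bm\Phi\circ\bm\tau=\bm\sigma$, then invoke the uniqueness of $\bm\tau'$. You spell out a bit more detail (why Remark~\ref{r.unique}~a) applies, and why $\Pi'\circ(\bm\sigma\times\mathrm{id}_{\cO'})=\bm\sigma'$), but the argument is the same.
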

\begin{proof}
	Let $\A^1_\cO=\spec\ \cO[T]$ and $A^{\N_0}_\cO=\spec\ \cO[Z_0,Z_1,\dots]$.
Let $\bm \sigma \colon \A^1_\cO\to \A^{\N_0}_\cO$ be the morphism in Proposition \ref{pro.fvlt} mapping $Z_n$ to $T^{q^n}$ on algebras,  and let $\bm \sigma' \colon \A^1_{\cO'}\to \A^{\N_0}_{\cO'}$ be the analogous morphism  $\cO'$ mapping $Z_n$ to $T^{q^{rn}}$ on algebras. Then $\bm\tau'$ is uniquely determined by the property $\bm\Phi'\circ\bm\tau'=\bm\sigma'$.
Since $\bm\Phi'\circ \bm u\circ (\bm \tau\times {\rm id}_{\cO'})=\Pi'\circ(\bm\Phi\times {\rm id}_{\cO'})\circ (\bm \tau\times {\rm id}_{\cO'})= \Pi'\circ(\bm\sigma \times {\rm id}_{\cO'})=\bm\sigma'$, the conclusion follows.
\end{proof}

Let $B$ be an $\cO'$-algebra $B$. 
As a consequence of the above lemma and $\cO$-linearity of the Drinfeld map $u_B$, it is $u_{ B}(\sum_{i=0}^n[b_i]\pi^i)=\sum_{i=0}^n[b_i]\pi^i$ and hence
\begin{equation}\label{e.ubpi}
u_{ B}\big(\sum_{i=0}^\infty[b_i]\pi^i\big)=\sum_{i=0}^\infty[b_i]\pi^i,
\end{equation}
where $[b_i]$ in the left-hand side (respectively, in the right-hand side)   is the Teichm\"{u}ller representative of $b_i$ in $W_\cO(B)$ (respectively, in $W_{\cO'}(B)$) and $\pi$ in the right-hand side is viewed as element of $\cO'$.

\begin{lemma} Let $\bm F, \bm F'$ be the Frobenius maps on $\W_\cO$ and $ \W_{\cO'}$ respectively. Then
	$\bm u\circ (\bm F^r\times {\rm id}_{\cO'})=\bm F'\circ \bm u$, where $\bm F^r$ is the $r$-fold composition of $\bm F$.
\end{lemma}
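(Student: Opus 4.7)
The plan is to prove the identity by the uniqueness principle of Remark \ref{r.unique} a), namely, by composing both morphisms with $\bm\Phi'$ and showing the resulting morphisms into $\A^{\N_0}_{\cO'}$ coincide. This works because the source $\W_\cO\times_\cO\spec(\cO')$ has global sections $\cO'[X_0,X_1,\dots]$, a polynomial ring over the domain $\cO'$, so $\varpi$ is not a zero divisor and any two liftings through $\bm\Phi'$ must agree.

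Concretely, I would first compute
\[
\bm\Phi'\circ \bm F'\circ \bm u = f'\circ \bm\Phi'\circ \bm u = f'\circ \Pi'\circ (\bm\Phi\times\id_{\cO'}),
\]
using Proposition \ref{pro.fvlt}(i) (applied with $\cO'$ in place of $\cO$) for the first step and the defining square \eqref{d.square} of Proposition \ref{prop.u} for the second, where $f'$ is the shift $Z_n\mapsto Z_{n+1}$ on $\A^{\N_0}_{\cO'}$. On the other side,
\[
\bm\Phi'\circ \bm u\circ (\bm F^r\times\id_{\cO'}) = \Pi'\circ (\bm\Phi\times\id_{\cO'})\circ (\bm F^r\times\id_{\cO'}) = \Pi'\circ (f^r\times\id_{\cO'})\circ (\bm\Phi\times\id_{\cO'}),
\]
iterating Proposition \ref{pro.fvlt}(i) for $\cO$ to rewrite $\bm\Phi\circ \bm F^r=f^r\circ \bm\Phi$.

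Thus the whole statement reduces to verifying the combinatorial identity $f'\circ\Pi'=\Pi'\circ f^r$ of endomorphisms of $\A^{\N_0}_{\cO'}$. This is a straightforward check on coordinates: both map $(x_0,x_1,x_2,\dots)$ to $(x_r,x_{2r},x_{3r},\dots)$, one by first taking the subsequence at indices $0,r,2r,\dots$ and then shifting, the other by first shifting by $r$ and then taking the subsequence at indices $0,r,2r,\dots$. With this identity in hand, Remark \ref{r.unique} a) applied to $\bm\Phi'$ gives $\bm u\circ(\bm F^r\times\id_{\cO'})=\bm F'\circ \bm u$.

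No step looks like a genuine obstacle: the key conceptual point is to reduce to the known compatibilities of $\bm F,\bm F',\bm u$ with their respective Witt polynomial morphisms, and the only combinatorial ingredient is the trivial commutation $f'\circ\Pi'=\Pi'\circ f^r$ of a shift with an $r$-fold decimation. The mild subtlety is making sure that Remark \ref{r.unique} a) is applied over $\cO'$ (so with $\varpi$ the relevant uniformizer and $\bm\Phi'$ the relevant morphism), which is legitimate because the source of both morphisms is affine with $\varpi$-torsion free ring of global sections.
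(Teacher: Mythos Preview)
Your proof is correct and follows essentially the same approach as the paper: compose both sides with $\bm\Phi'$, use the defining relations $\bm\Phi'\circ\bm F'=f'\circ\bm\Phi'$, $\bm\Phi\circ\bm F=f\circ\bm\Phi$, and $\bm\Phi'\circ\bm u=\Pi'\circ(\bm\Phi\times\id_{\cO'})$, reduce to the elementary identity $f'\circ\Pi'=\Pi'\circ f^r$ of shifts, and conclude by the uniqueness principle of Remark~\ref{r.unique}~a). The paper writes the chain of equalities in one line and uses the single symbol $f$ for the shift over both $\cO$ and $\cO'$, but the argument is the same.
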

\begin{proof}
Let $f$ also denote the endomorphism of $\A^{\N_0}_{\cO'}=\spec(\cO'[Z_0,Z_1,\dots])$ associated to $\bm F'$ as in Proposition \ref{pro.fvlt} i), which maps $Z_n$ to $Z_{n+1}$ on algebras, and let $f^r$ denote the $r$-fold composition of $f$.  Since $\bm \Phi'\circ\bm F'\circ \bm u=
f\circ \bm \Phi'\circ\bm u=
f\circ \Pi'\circ  (\bm \Phi\times 
{\rm id}_{\cO'})=
\Pi'\circ f^r\circ  (\bm \Phi\times 
{\rm id}_{\cO'})=
\Pi'\circ(\bm \Phi\times 
{\rm id}_{\cO'})\circ (\bm F^r\times {\rm id}_{\cO'})=
\bm \Phi'\circ \bm u\circ (\bm F^r\times {\rm id}_{\cO'})$, the conclusion follows by Remark \ref{r.unique} a).
\end{proof}

\begin{lemma}\label{l.uvf} Let $\bm{\frac{ \pi}{  \varpi}}$ denote the group homomorphism of $\W_{\cO'}$ associated with $\frac{ \pi}{ \varpi}\in \cO'$ as in Proposition \ref{pro.fvlt} iv).
Then $\bm u\circ( \bm V\times 
{\rm id}_{\cO'})=\bm{\frac{ \pi}{  \varpi}}\circ\bm V' \circ \bm u\circ (\bm F^{r-1}\times {\rm id}_{\cO'})$.
\end{lemma}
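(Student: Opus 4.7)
The plan is to invoke the uniqueness principle of Remark \ref{r.unique} a): the source $\W_\cO\times_\cO\spec\,\cO'=\spec\,\cO'[X_0,X_1,\dots]$ has $\varpi$-torsion free ring of global sections, so two morphisms into $\W_{\cO'}$ coincide as soon as their compositions with $\bm\Phi'$ agree. The task is thereby reduced to checking a single identity of morphisms into $\A^{\N_0}_{\cO'}$, which in turn reduces to a small coordinate-level calculation involving the shift, projection, and scalar endomorphisms of $\A^{\N_0}_{\cO'}$.

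First I would unwind the left-hand side: by the commutative square \eqref{d.square} and Proposition \ref{pro.fvlt}(ii),
\[
\bm\Phi'\circ\bm u\circ(\bm V\times{\rm id}_{\cO'})=\Pi'\circ v\circ(\bm\Phi\times{\rm id}_{\cO'}),
\]
where (after base change to $\cO'$) $v$ sends $(z_0,z_1,\dots)\mapsto(0,\pi z_0,\pi z_1,\dots)$. A direct coordinate check shows that $\Pi'\circ v$ acts as $(z_0,z_1,\dots)\mapsto(0,\pi z_{r-1},\pi z_{2r-1},\dots)$.

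Next I would unwind the right-hand side, in the order: Proposition \ref{pro.fvlt}(iii) (for the scalar $\pi/\varpi\in\cO'$), Proposition \ref{pro.fvlt}(ii) applied to $\W_{\cO'}$, the commutative square \eqref{d.square}, and Proposition \ref{pro.fvlt}(i). This yields
\[
\bm\Phi'\circ\bm{\tfrac{\pi}{\varpi}}\circ\bm V'\circ\bm u\circ(\bm F^{r-1}\times{\rm id}_{\cO'})=f_{\pi/\varpi}\circ v'\circ\Pi'\circ f^{r-1}\circ(\bm\Phi\times{\rm id}_{\cO'}),
\]
with $f^{r-1}$ the $(r-1)$-fold shift, $v'$ the $\varpi$-analogue of $v$, and $f_{\pi/\varpi}$ multiplication by $\pi/\varpi$. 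Now $\Pi'\circ f^{r-1}$ sends $(z_0,z_1,\dots)$ to $(z_{r-1},z_{2r-1},z_{3r-1},\dots)$; applying $v'$ produces $(0,\varpi z_{r-1},\varpi z_{2r-1},\dots)$; and $f_{\pi/\varpi}$ converts every $\varpi$ into $\pi$, giving $(0,\pi z_{r-1},\pi z_{2r-1},\dots)$. This is precisely the output of $\Pi'\circ v$.

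Since the two compositions with $\bm\Phi'$ agree as morphisms $\W_\cO\times_\cO\spec\,\cO'\to\A^{\N_0}_{\cO'}$, Remark \ref{r.unique} a) delivers the claimed equality. The only delicate point is bookkeeping of indices when composing $f^{r-1}$ with $\Pi'$ (an off-by-one between $nr-1$ and $(n+1)r-1$ is easy to commit); conceptually, the prefactor $\pi/\varpi$ is exactly the correction needed to reconcile the $\pi$ produced by $v$ on the left with the $\varpi$ produced by $v'$ on the right, which explains why the identity holds as stated.
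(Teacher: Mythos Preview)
Your proposal is correct and follows essentially the same route as the paper: both sides are composed with $\bm\Phi'$, unwound via \eqref{d.square} and Proposition \ref{pro.fvlt} to the identity $\Pi'\circ v = f_{\pi/\varpi}\circ v'\circ\Pi'\circ f^{r-1}$ on $\A^{\N_0}_{\cO'}$ (sending $Z_0\mapsto 0$, $Z_n\mapsto \pi Z_{rn-1}$), and uniqueness from Remark \ref{r.unique} a) finishes. Your coordinate bookkeeping matches the paper's exactly.
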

\begin{proof}
We keep notation as in Proposition \ref{pro.fvlt}: $v$ is the endomorphism  of the affine space $\A^{\N_0}_\cO$ associated with $\bm V$, similarly for $v',\bm V'$ over $\cO'$; $f$ is the endomorphism associated with $\bm F$ and $f_{ \frac{ \pi}{  \varpi}}$ the one associated with $\bm{\frac{ \pi}{  \varpi}}$.

Note that $\Pi'\circ (v\times 
	{\rm id}_{\cO'})$ maps $Z_0$ to $0$ and $Z_{n}$ to $\pi Z_{rn-1}$ if $n>0$. 
	Now \[\bm \Phi'\circ \bm u\circ( \bm V\times 
	{\rm id}_{\cO'})= \Pi'\circ  (\bm \Phi\times 
	{\rm id}_{\cO'})\circ (\bm V\times 
	{\rm id}_{\cO'}) = \Pi'\circ (v\times 
	{\rm id}_{\cO'})\circ  (\bm \Phi\times 
	{\rm id}_{\cO'}).
	\]
On the other hand, 
\[\bm \Phi'\circ\bm{\frac{ \pi}{  \varpi}}\circ\bm V' \circ \bm u  =
f_{ \frac{ \pi}{  \varpi}}\circ\bm  \Phi'\circ \bm V'\circ  \bm u=
f_{ \frac{ \pi}{  \varpi}}\circ v'\circ \bm \Phi'\circ \bm u=
f_{ \frac{ \pi}{  \varpi}}\circ v'\circ \Pi'\circ  (\bm \Phi\times {\rm id}_{\cO'}).
\] 
Hence 
 \[\bm \Phi'\circ\bm{\frac{ \pi}{  \varpi}}\circ\bm V' \circ \bm u\circ (\bm F^{r-1}\times {\rm id}_{\cO'}) =
 f_{ \frac{ \pi}{  \varpi}}\circ v'\circ \Pi'\circ (f^{r-1}\times {\rm id}_{\cO'})\circ  (\bm \Phi\times 
 {\rm id}_{\cO'}).
 \]  Since both $ \Pi'\circ (v\times 
{\rm id}_{\cO'})$ and $
f_{ \frac{ \pi}{  \varpi}}\circ v'\circ \Pi'\circ (f^{r-1}\times {\rm id}_{\cO'})$ induce the endomorphism of $\cO'[Z_0,Z_1,\dots]$ mapping $Z_0$ to $0$ and $Z_n$ to $\pi Z_{rn-1}$ for $n>0$, they coincide and the conclusion follows  by Remark \ref{r.unique} a).
\end{proof}

We now discuss properties of the Drinfeld morphism.

\begin{lemma}\label{l.inj}
	Let $B$ be a reduced $\kk'$-algebra. Then Drinfeld morphism  induces an injective map $u_B\colon W_\cO(B)\to W_{\cO'}(B)$  on $B$-sections.
\end{lemma}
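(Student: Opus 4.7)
The plan is to reduce the statement to the case of a perfect $\kk'$-algebra via the perfection functor, and then settle that case by exploiting the unique Teichm\"uller expansion provided by Lemma \ref{l.wn}.

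For the reduction, observe that since $B$ is a reduced $\kk'$-algebra, the absolute Frobenius on $B$ is injective, hence the canonical map $B\hookrightarrow B^{\mathrm{perf}}:=\varinjlim_F B$ to its inverse perfection is injective. Since $W_\cO(-)$ and $W_{\cO'}(-)$ are, as set-valued functors, simply $(-)^{\N_0}$ with the ring operations defined componentwise from Witt polynomials, both vertical maps in the commutative square
\begin{equation*}
\xymatrix{W_\cO(B)\ar[d]\ar[r]^{u_B}&W_{\cO'}(B)\ar[d]\\ W_\cO(B^{\mathrm{perf}})\ar[r]^{u_{B^{\mathrm{perf}}}}&W_{\cO'}(B^{\mathrm{perf}})}
\end{equation*}
arising from the functoriality of $\bm u$ (Proposition \ref{prop.u}) are injective. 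This reduces the injectivity of $u_B$ to that of $u_A$ for $A=B^{\mathrm{perf}}$ a perfect $\kk'$-algebra.

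For the perfect case, Lemma \ref{l.wn} writes any $\bm a\in W_\cO(A)$ uniquely as $\sum_{i\geq 0}[a_i]\pi^i$, and \eqref{e.ubpi} gives $u_A(\bm a)=\sum_{i\geq 0}[a_i]\pi^i$, now computed in $W_{\cO'}(A)$ with $\pi\in\cO'$. Writing $\pi=\alpha\varpi^e$ with $\alpha\in\cO'^\times$, I would show that vanishing of $\sum[a_i]\alpha^i\varpi^{ei}$ in $W_{\cO'}(A)$ forces all $a_i=0$. The key facts are: for a perfect $\kk'$-algebra $A$ one has $\varpi W_{\cO'}(A)=V'W_{\cO'}(A)$ with quotient identified with $A$ via the leading Teichm\"uller component; moreover multiplication by $\varpi=V'F'$ (analogue of \eqref{e.FVVF2}) is injective on $W_{\cO'}(A)$, since $V'$ is the shift on the underlying product set and $F'$ is componentwise $q^r$-th power on the reduced algebra $A$. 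Reducing the relation modulo $\varpi$ kills every term except $[a_0]$ and forces $a_0=0$; dividing through by $\pi$, which is licit by $\varpi$-torsion-freeness, and iterating recovers $a_i=0$ for every $i$.

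The main obstacle lies in the perfect case: one has to argue that the formal sum $\sum[a_i]\pi^i$ viewed in $W_{\cO'}(A)$, which is a priori \emph{not} a $\varpi$-adic expansion, can nonetheless be analyzed as one. This relies on the three ingredients above (the identification $\pi=\alpha\varpi^e$ with $\alpha$ a unit, the $\varpi$-torsion-freeness of $W_{\cO'}(A)$ coming from reducedness of $A$, and the $\varpi$-adic separation of $W_{\cO'}(A)$ for perfect $A$), together with the fact that the Teichm\"uller leading term $[a_0]$ is the only summand surviving reduction modulo $\varpi$.
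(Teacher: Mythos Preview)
Your proof is correct and follows essentially the same route as the paper: reduce to the perfect case via the injective embedding $B\hookrightarrow B^{\mathrm{perf}}$, then use the Teichm\"uller expansion of Lemma~\ref{l.wn} together with \eqref{e.ubpi}, and conclude by exploiting that $W_{\cO'}(A)$ is $\pi$-torsion-free and that $\pi^i$ lands in $(V'_A)^{ei}W_{\cO'}(A)$. The paper compresses your iterative reduction modulo $\varpi$ into the single sentence ``injectivity is then clear since $\pi^i\in (V'_B)^{ei}W_{\cO'}(B)$ and $W_{\cO'}(B)$ has no $\pi$-torsion'', but the underlying argument is the same.
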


\begin{proof}
	Let $B^\pf$ denote the perfect closure of $B$, i.e., $B^\pf=\varinjlim_{i\in \N_0} B_i$ with $B_i=B$ and Frobenius $b\mapsto b^p$ as transition maps. Since $B$ is reduced, the canonical map $\phi\colon B=B_0\to B^\pf$ is injective and thus the same is $\W_\cO(\phi)$. Hence, it suffice to consider the case where $B$ is perfect. By Lemma \ref{l.wn},
	any element $\bm b$ of $W_\cO(B)$ is of the form $\sum_{i=0}^\infty[b_i] \pi^i, b_i\in B$, and hence $u_B(\bm b)=\sum_{i=0}^\infty [b_i]\pi^i$ by \eqref{e.ubpi}. Injectivity of $u_B$ is then clear since $\pi^i\in (V'_B)^{ei}W_{\cO'}(B)$ and $W_{\cO'}(B)$ has no $\pi$-torsion.
\end{proof}

Note that if $\cO'\neq \cO$ and $B$ is a non-reduced $\kk'$-algebra, then $u_B$ is not injective. Indeed let $0\neq b\in B$ such that $b^p=0$. Then by \eqref{e.FVVF} \& \eqref{e.FVVF2} with $\cO'$ in place of $\cO$ and Lemmas \ref{l.utau} and \ref{l.uvf} we have 
\[u_B(V_B[b])=\alpha\varpi^{e-1} V'_B([b])
=\alpha (V'_B)^e(F'_B)^{e-1}([b])=\alpha(V'_B)^e(0)=0
\] if $r=1$ and $e>1$, and $u_B(V_B[b])=\alpha\varpi^{e-1} V'_B(u_B( 0))=0$ if $r>1$.

More precise statements can be given in the unramified or totally ramified cases.

\subsubsection{The unramified case}

\begin{lemma}\label{l.ub}
	Let $\cO'/\cO$ be an unramified extension and let $B$ be a $\kk'$-algebra. Then $u_B\colon W_\cO(B)\to W_{\cO'}(B)$ is injective (respectively, surjective, bijective) if $B$ is reduced (respectively, semiperfect, perfect).
\end{lemma}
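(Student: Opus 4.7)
My plan is to read off all three assertions from the explicit description of ramified Witt vectors in Lemma \ref{l.wn}, combined with the formula \eqref{e.ubpi}. Because $\cO'/\cO$ is unramified we have $e=1$, so $\pi$ remains a uniformizer of $\cO'$; by Remark \ref{r.unique} b) I may freely assume $\varpi=\pi$. Under this choice, Lemma \ref{l.wn} applies symmetrically to both $W_\cO(B)$ and $W_{\cO'}(B)$: in either case every element admits a presentation $\sum_{j\ge 0}[b_j]\pi^{j}$ when $B$ is semiperfect, and this presentation is unique when $B$ is perfect. Formula \eqref{e.ubpi} then says that $u_B$ carries the presentation $\sum_{j\ge 0}[b_j]\pi^{j}$ of an element of $W_\cO(B)$ to the formally identical presentation $\sum_{j\ge 0}[b_j]\pi^{j}$ in $W_{\cO'}(B)$.

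Injectivity for reduced $B$ is already the content of Lemma \ref{l.inj} (which does not even need the unramified hypothesis), so there is nothing new to prove there.

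For surjectivity when $B$ is semiperfect, let $\bm c\in W_{\cO'}(B)$. By the semiperfect part of Lemma \ref{l.wn} applied to $\cO'$, one can write $\bm c=\sum_{j=0}^{\infty}[c_j]\pi^{j}$ for some $c_j\in B$. Applying the same lemma over $\cO$ to the same sequence $(c_j)$ produces an element $\bm b=\sum_{j=0}^{\infty}[c_j]\pi^{j}\in W_\cO(B)$, and \eqref{e.ubpi} gives $u_B(\bm b)=\bm c$. Finally, if $B$ is perfect it is both reduced and semiperfect, so injectivity and surjectivity combine to bijectivity.

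I do not anticipate any serious obstacle; the only point requiring a brief comment is that Lemma \ref{l.wn}, though formulated over $\cO$, applies equally well to $\cO'$ (with uniformizer $\varpi=\pi$ and Frobenius exponent $q^r$), since the hypotheses ``reduced/semiperfect/perfect'' are intrinsic to $B$ as an $\F_p$-algebra. It is also worth remarking that the argument visibly breaks down in the ramified case: there the image of $u_B$ on the $\pi$-power Teichm\"uller series exhausts only the series $\sum_j [c_j]\varpi^{ej}$, whence the necessity of the unramified assumption.
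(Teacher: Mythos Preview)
Your argument is correct and takes a genuinely different route from the paper. The paper proceeds by an explicit inductive computation of the polynomials $u_m=\bm u^{*}(Y_m)$ modulo $\pi$, establishing the closed formula $u_m\equiv X_m^{q^{m(r-1)}}$; on $B$-sections this reads $c_m=b_m^{q^{m(r-1)}}$ for every $\kk'$-algebra $B$, from which injectivity/surjectivity/bijectivity are immediate. You instead leverage the Teichm\"uller presentation of Lemma~\ref{l.wn} together with \eqref{e.ubpi}, avoiding any new computation. One small imprecision: Lemma~\ref{l.wn} asserts that elements \emph{admit} a series $\sum_j[b_j]\pi^{\,j}$, not that an arbitrary sequence $(c_j)$ \emph{defines} one; but this is harmless, since the proof of that lemma shows $\sum_{j=0}^{n-1}[c_j]\pi^{\,j}=(c_0,c_1^{q},\dots,c_{n-1}^{q^{n-1}},0,\dots)$, so the infinite series is simply the Witt vector $(c_0,c_1^{q},c_2^{q^{2}},\dots)$ and always exists. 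What your approach buys is economy and conceptual clarity; what the paper's approach buys is the explicit coordinate formula \eqref{e.cb}, which is precisely what is needed in the very next result (Proposition~\ref{p.perf}) to identify the kernel of $\bm u_{\kk'}$ as $\spec\bigl(\kk'[X_0,X_1,\dots]/(X_0,\dots,X_i^{q^{i(r-1)}},\dots)\bigr)$.
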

\begin{proof}
	Let $\W_\cO=\spec \ \cO[X_0,X_1,\dots]$, $\W_{\cO'}=\spec \ \cO'[Y_0,Y_1,\dots]$ and set $u_i=u^*(Y_i)\in \cO'[X_0,\dots,]$, so that $\Phi'_m(u_0,u_1,\dots)=\Phi_{mr}(X_0,X_1,\dots)$ by commutativity of \eqref{d.square}. We   claim that
	\[ u_0=X_0,  \qquad u_m\equiv X_m^{q^{m(r-1)}}  \quad \text{  mod  } (\pi)  \quad \text{ for  }    m>0.  \]
	Since $u_0=X_0$ is clear by construction, only the second equivalence has to be proved.
	We proceed by induction on $m$. First note that  for any $m\geq 0$
	\[\Phi_{(m+1)r}(X_0,\dots)\equiv X_0^{q^{(m+1)r}}+\dots+\pi^m X_{m}^{q^{(m+1)r-m}}+\pi^{m+1}X_{m+1}^{q^{(m+1)r-m-1}} \text{ mod } (\pi^{m+2})\]
	and 
	\[\Phi_{m+1}'(Y_0,\dots)=Y_0^{q^{ (m+1)r}}+\dots+\pi^m Y_m^{q^{r}} +\pi^{m+1}Y_{m+1 }.\] 
	Assume that  $u_i\equiv X_i^{q^{i(r-1)}}$ mod $(\pi)$ for $0\leq i\leq m$, then 
	\[\pi^{i}u_i^{q^{(m+1-i)r}}   \equiv  \pi^{i}X_{i}^{q^{(m+1)r-i}}\text{ mod } (\pi^{i+1+(m+1-i)r}),
	\]
	where $i+ 1+(m+1-i)r>i+1+m+1\geq m+2$. Hence \[ 0=\Phi_{m+1}'(u_0,\dots)-\Phi_{(m+1)r}(X_0,\dots)
	\equiv 
	\pi^{m+1}u_{m+1 } -\pi^{m+1}X_{m+1}^{q^{(m+1)r-m-1}} \text{ mod } (\pi^{m+2}),\]
	thus the claim.
	
	Now, if $B$ is any $\cO$-algebra, $\bm b=(b_0,\dots)\in W_\cO(B)$ and $u_B(\bm b)=\bm c=(c_0,c_1,\dots)$ it is $c_0=b_0$ and $c_m\equiv b_m^{q^{m(r-1)}}$ mod $ \pi B$. In particular, if $B$ is a $\kk'$-algebra, it is 
	\begin{equation}\label{e.cb}c_m= b_m^{q^{m(r-1)}},  \qquad \forall m\geq 0.\end{equation} This implies that $u_B$ is injective if $B$ is reduced (as already seen in Lemma \ref{l.inj}), surjective if $B$ is semiperfect and bijective if $B$ is perfect. \end{proof}

The above lemma has the following geometric interpretation. 

\begin{proposition}\label{p.perf}
	Assume that the extension $\cO'/\cO$ is unramified. Then Drinfeld's morphism $\bm u$ restricted to special fibers is a universal homeomorphism with pro-infinite\-simal kernel isomorphic to $\spec(\kk'[X_0,X_1,\dots]/(X_0, \dots,X_i^{q^{i(r-1)}}\!\!, \dots)$ where $q^r$ is the cardinality of $\kk'$.
\end{proposition}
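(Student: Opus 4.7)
The approach would combine Lemma \ref{l.ub} with the abstract Lemma \ref{l.pf}, together with the explicit formulas for the coordinates of $\bm u$ that already emerged in the proof of Lemma \ref{l.ub}. First I would note that for every $\kk'$-algebra $B$, the map $u_B\colon W_\cO(B)\to W_{\cO'}(B)$ agrees with $(\bm u_{\kk'})_B$, where $\bm u_{\kk'}$ denotes the base change of $\bm u$ along $\spec(\kk')\to\spec(\cO')$. By Lemma \ref{l.ub}, $u_B$ is bijective for every perfect $\kk'$-algebra $B$. Applying Lemma \ref{l.pf} to $\psi=\bm u_{\kk'}$ then immediately yields the universal-homeomorphism claim (and in fact shows that $\bm u_{\kk'}^{\pf}$ is an isomorphism).

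To identify the kernel, I would reuse the computation carried out inside the proof of Lemma \ref{l.ub}: with $\W_\cO=\spec\,\cO[X_0,X_1,\dots]$ and $\W_{\cO'}=\spec\,\cO'[Y_0,Y_1,\dots]$, the polynomials $u_m:=\bm u^*(Y_m)$ satisfy $u_0=X_0$ and $u_m\equiv X_m^{q^{m(r-1)}}\pmod{\pi}$ for $m\geq 1$. Thus $\bm u_{\kk'}$ corresponds on coordinate rings to the $\kk'$-algebra map $Y_m\mapsto X_m^{q^{m(r-1)}}$, and its scheme-theoretic kernel, defined as the pullback along the zero section of $\W_{\cO'}\times_{\cO'}\spec(\kk')$, is exactly
\[
\spec\bigl(\kk'[X_0,X_1,\dots]/(X_0,X_1^{q^{r-1}},X_2^{q^{2(r-1)}},\dots)\bigr),
\]
as asserted in the proposition.

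For the pro-infinitesimal assertion, I would present the coordinate ring $R$ of the kernel as the filtered colimit, over $n\in\N$, of the finite local $\kk'$-algebras
\[
R_n:=\kk'[X_0,\dots,X_n]/(X_0,X_1^{q^{r-1}},\dots,X_n^{q^{n(r-1)}}).
\]
Each $\spec R_n$ is infinitesimal, since its maximal ideal $(X_0,\dots,X_n)$ is visibly nilpotent, so dualising exhibits the kernel as a cofiltered inverse limit of finite infinitesimal $\kk'$-schemes. The only substantive ingredient is the congruence $u_m\equiv X_m^{q^{m(r-1)}}\pmod{\pi}$, which is precisely where the unramified hypothesis enters and which has already been settled in Lemma \ref{l.ub}; granted that lemma, the rest of the argument is essentially formal bookkeeping with Lemma \ref{l.pf} and the definition of kernel as a fibre over the zero section.
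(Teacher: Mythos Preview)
Your proposal is correct and follows essentially the same route as the paper: Lemmas \ref{l.pf} and \ref{l.ub} give the universal-homeomorphism assertion, and the congruence $u_m\equiv X_m^{q^{m(r-1)}}\pmod{\pi}$ established in the proof of Lemma \ref{l.ub} (recorded there as \eqref{e.cb}) yields the explicit kernel. The paper leaves the pro-infinitesimality implicit in the explicit form of the kernel, whereas you spell it out via the filtration by the $R_n$; this is harmless extra detail rather than a different argument.
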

\begin{proof} The first assertion follows from  Lemmas \ref{l.pf}  and \ref{l.ub}. By  the very explicit description of $\bm u_{\kk}$ in \eqref{e.cb}  one gets  the   assertion on the kernel.
\end{proof}

\subsubsection{The totally ramified case}
Let $\cO'/\cO$ be a totally ramified extension of degree $e>1$. Then  $\kk'=\kk$,  $\cO'=\oplus_{i=0}^{e-1}\cO\varpi^i$ as $\cO$-module, and $\pi=\alpha\varpi^e$ with $\alpha$ a unit in $\cO'$.
	Let $B$ be  a $\cO'$-algebra.
We can not expect $u_B\colon W_\cO(B)\to W_{\cO'}(B)$ to be surjective, even if $B$ is a  perfect $\kk$-algebra; indeed \eqref{e.ubpi} shows that $\varpi$ is not in the image of $u_B$.  
Note that   $u_B$ is a morphism of $\cO$-algebras  and hence we can extend it to a morphism of $\cO'$-algebras
\begin{equation}\label{e.urab}
u^{\ra}_B=u_B\otimes {\id}\colon W_{\cO}(B)\otimes_\cO \cO'\to W_{\cO'}(B), \qquad \sum_{i=0}^{e-1} \bm b_i\otimes \varpi^i\mapsto \sum_{i=0}^{e-1} u_B(\bm b_i) \varpi^i,
\end{equation} 
with $\bm b_i\in W_\cO(B)$. Since  
for any $\cO$-algebra $A$ it is 
\begin{equation}\label{e.waw}
W_{\cO}(A)\otimes_\cO \cO'= W_{\cO}(A)\otimes_{\cO} \oplus_{i=0}^{e-1}\cO \varpi^i=\oplus_{i=0}^{e-1} W_{\cO}(A)  \varpi^i,
\end{equation} 
forgetting about the multiplication on $W_\cO(B)\otimes_\cO\cO'$,  $ u^\ra_B $ is the group homomorphism making the following diagram commute
{\small	\begin{equation}\label{d.double}
	\xymatrix{\prod_{i=0}^{e-1}\W_{\cO }(B)\ar[ddd]^{\prod  \Phi_B  }\ar[rrr]^(0.6){  u^\ra_B} &&&  \W_{\cO'}(B)\ar[ddd]^{ \Phi'_B }\\
		&	(\bm b_i)_i \ar@{|->}[d] \ar@{|->}[r]  & \sum_i u_B(\bm b_i)\varpi^i  \ar@{|->}[d]&\\
		&( \Phi_B(\bm b_i))\ar@{|->}[r] &   \sum_i  \Phi_B ( \bm b_i )\varpi^i= \sum_i  \Phi'_B( u_B(\bm b_i))\varpi^i & \\
		\prod_{i=0}^{e-1}	B^{\N_0}  \ar[rrr]  &&&B^{\N_0} 
	}
	\end{equation}}
 We deduce from \eqref{e.waw} that the
product group scheme $ \prod_{i=0}^{e-1} \W_{\cO}$, whose group of $A$-sections is $ \oplus_{i=0}^{e-1} W_{\cO}(A)$, for any $\cO$-algebra $A$,    can  be endowed  with a ring scheme structure that depends on the Eisenstein polynomial of $\varpi$ and mixes components. We denote by $\prod^\varpi  \W_{\cO }$ the resulting ring scheme over $\cO$. 
In particular the functoriality of the maps $u_B^\ra$ say the existence of a morphism of ring schemes over $\cO'$
\[\bm u^\ra \colon \prod^\varpi  \W_{\cO } \times_\cO\spec (\cO')\to \W_{\cO'}
\]
which induces $u^\ra_B$ on $B$-sections. More precisely,  $\bm u^\ra$ is a morphism of schemes of $\cO'$-algebras. 
 Let 
 \begin{equation}\label{e.rak}
 \bm u^{\ra}_{\kk}\colon \prod^\varpi \W_{\cO,\kk}\to \W_{\cO',\kk}.
\end{equation}
be the restriction of $\bm u^{\ra}$ to   special fibers. 

 We can not expect that results in Lemma \ref{l.ub} and Proposition \ref{p.perf}   hold in the totally ramified case, but they hold for $\bm u^{\ra}$ in place of $\bm u$.

\begin{lemma}\label{l.ue} 
	Let $\cO'/\cO$ be a totally ramified extension of degree $e$ and let $B$ be a $\kk$-algebra. If $B$ is reduced (respectively, semiperfect, perfect) then 
	the homomorphism $u^{\ra}_B=u_B\otimes {\id}$   in \eqref{e.urab} is injective (respectively, surjective, bijective).
\end{lemma}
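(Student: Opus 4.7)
The plan is to prove three separate implications: (a) if $B$ is semiperfect then $u_B^\ra$ is surjective; (b) if $B$ is perfect then $u_B^\ra$ is injective; (c) if $B$ is reduced then $u_B^\ra$ is injective. The full lemma follows because a perfect algebra is both semiperfect and reduced, so (a) and (b) together yield bijectivity for perfect $B$, while (a) alone and (c) alone give the other two cases.

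For (a), I would use Lemma~\ref{l.wn} to write any $\bm c\in W_{\cO'}(B)$ as $\sum_{k\ge 0}[c_k]\varpi^k$. Decomposing $k=ej+i$ with $0\le i<e$ and using $\pi=\alpha\varpi^e$, one rewrites each $\varpi^k$ as $\alpha^{-j}\pi^j\varpi^i$ in $\cO'$. Since $\cO'=\bigoplus_{l=0}^{e-1}\cO\varpi^l$ as $\cO$-module, I expand $\alpha^{-j}\varpi^i=\sum_{l=0}^{e-1} a_{i,j,l}\varpi^l$ with $a_{i,j,l}\in\cO$ and set $\bm d_l:=\sum_{i,j}a_{i,j,l}[c_{ej+i}]\pi^j\in W_\cO(B)$. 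The inner sum over $j$ converges because, for semiperfect $B$, $\pi^j W_\cO(B)=V^j W_\cO(B)$ (use \eqref{e.FVVF}, \eqref{e.FVVF2}, and surjectivity of the Frobenius on $B$) and $W_\cO(B)=\varprojlim_n W_\cO(B)/V^n W_\cO(B)$. A direct check using the $\cO$-linearity of $u_B$ and $u_B([c])=[c]$ (Lemma~\ref{l.utau}) shows $u_B^\ra(\sum_l \bm d_l\otimes\varpi^l)=\bm c$.

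For (b), the unique Teichm\"uller expansion for perfect $B$ in Lemma~\ref{l.wn} writes a generic element of $\bigoplus_l W_\cO(B)\varpi^l$ as $\sum_{l,j}[b_{l,j}]\pi^j\otimes\varpi^l$; its image under $u_B^\ra$ is $\sum_{l,j}[b_{l,j}]\alpha^j\varpi^{ej+l}\in W_{\cO'}(B)$. Assuming this vanishes, I would induct on $m$ using the $\varpi$-adic filtration and the graded-piece isomorphism $\varpi^m W_{\cO'}(B)/\varpi^{m+1} W_{\cO'}(B)\cong B$, $[c]\varpi^m\mapsto c$ (again from Lemma~\ref{l.wn}). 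Since $\bar\alpha\in\kk^{\times}$, one has $\alpha\equiv[\bar\alpha]\pmod{\varpi}$ in $W_{\cO'}(B)$, so that the new contribution at level $m=ej+l$ reduces in the graded piece to $b_{l,j}\bar\alpha^{j}$; its vanishing and the invertibility of $\bar\alpha$ force $b_{l,j}=0$, closing the induction. For (c), the canonical injection $B\hookrightarrow B^\pf$ induces an injection $W_\cO(B)\otimes_\cO\cO'\hookrightarrow W_\cO(B^\pf)\otimes_\cO\cO'$ (since $\cO'$ is free over $\cO$) and an injection $W_{\cO'}(B)\hookrightarrow W_{\cO'}(B^\pf)$ (on underlying sets these are inclusions of products $\prod B\hookrightarrow\prod B^\pf$); naturality of $\bm u^\ra$ in $B$, together with (b) applied to the perfect algebra $B^\pf$, yields (c).

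The main obstacle I expect is step (b): because $\alpha^j$ is not itself a Teichm\"uller element, one cannot simply read off the $b_{l,j}$ from the uniqueness statement in Lemma~\ref{l.wn}. The key ingredients are the mod-$\varpi$ reduction $\alpha\equiv[\bar\alpha]$ and the multiplicativity of the Teichm\"uller map, which together allow the $\varpi$-adic induction to isolate one coefficient at a time; once (b) is in place, the rest of the argument is formal.
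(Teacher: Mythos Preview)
Your proof is correct and follows essentially the same strategy as the paper: reduce injectivity for reduced $B$ to the perfect case via $B\hookrightarrow B^{\pf}$, and handle both injectivity (perfect case) and surjectivity (semiperfect case) through the Teichm\"uller expansion of Lemma~\ref{l.wn}. The only cosmetic differences are that the paper's surjectivity argument keeps the $\cO'$-coefficients $\alpha^{-h}$ intact and appeals directly to $\cO'$-linearity of $u_B^{\ra}$ rather than expanding $\alpha^{-j}\varpi^i$ in the basis $\{\varpi^l\}$, and its injectivity argument in the perfect case is terser (it rewrites $u_B^{\ra}(\bm x)$ as $\sum_{i,j}\alpha^j(V'_B)^{ej+i}[\tilde b_{i,j}]$ and asserts that injectivity follows), whereas your $\varpi$-adic induction using $\alpha\equiv[\bar\alpha]\bmod\varpi$ makes explicit the graded-piece computation the paper leaves to the reader.
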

\begin{proof}
	For the injectivity,  as in the proof of Lemma \ref{l.inj}, we may assume that $B$ is perfect. Let $\bm x=\sum_{i=0}^{e-1}\bm b_i\otimes \omega^i$ with $\bm b_i=\sum_{j=0}^\infty [b_{i,j}]\pi^j\in W_\cO(B)$ by Lemma \ref{l.wn}.
	Then by \eqref{e.ubpi} it is  $u^{\ra}_B(\bm x)=\sum_{i=0}^{e-1}\sum_{j=0}^\infty [b_{i,j}]\pi^j\varpi^i=\sum_{i=0}^{e-1}\sum_{j=0}^\infty \alpha_j (V'_B)^{ej+i}[\tilde b_{i,j}] $ with $\alpha =\pi/\varpi^e$ a unit in $\cO'$, $\tilde b_{i,j}$ the $q^{ej+i}$th power of $b_{i,j} $ and $V_B'$ the Verschiebung on $W_{\cO'}(B)$. 
 Hence injectivity follows. 
	
	Now we prove surjectivity in the case  where $B$ is semiperfect. 
	By Lemma \ref{l.wn}   any element of $W_{\cO'}(B)$ can be written in the form \[
	\sum_{j=0}^{\infty} [a_{j}]\varpi^j
	=\sum_{i=0}^{e-1}\sum_{h=0}^{\infty} [a_{he+i}]\pi^h\varpi^i/\alpha^h
	\] 
	It suffices to check that $\sum_{h=0}^{\infty} [a_{he+i}]\pi^h\alpha^{-h}$ is in the image of $u_B^{\ra}$ for all $i$. 
	Note that the series $\sum_{h=0}^{\infty} [a_{he+i}]\otimes\pi^h \alpha^{-h}$ is  in $W_{\cO}(B)\otimes_{\cO }\cO'$ since
	\begin{multline*}W_{\cO}(B)\otimes_{\cO }\cO'\simeq
	\big(\varprojlim_m W_{\cO}(B)/\pi^mW_\cO(B)\big)\otimes_{\cO }\cO'\simeq\\
	\varprojlim_m \big((W_{\cO}(B)/\pi^mW_\cO(B))\otimes_{\cO }\cO'\big)=\varprojlim_m W_{\cO}(B)\otimes_{\cO }\cO' /\pi^m\left(W_\cO(B)\otimes_{\cO }\cO'\right),
	\end{multline*} 
	where the first isomorphism follows by Lemma \ref{l.wn} and the second by the fact that $\cO'$ is a finite free $\cO$-module. Now by 
	$\cO'$-linearity of $u_B^{\ra}$ and Lemma \ref{l.utau}
	\[u_B^{\ra}\big(\sum_{h=0}^{\infty} [a_{he+i}]\otimes\pi^h \alpha^{-h} \big)=\sum_{h=0}^{\infty} [a_{he+i}] \pi^h \alpha^{-h},
	\]
	and we are done.
\end{proof}

 We now study morphisms $\bm u$ and $\bm u^\ra$.

\begin{proposition}\label{p.integral}
	Let $\cO'/\cO$ be a totally ramified extension of degree $e$. Then  the morphism
	$\bm u^{\ra}_{\kk} $ in \eqref{e.rak} is a universal homeomorphism with pro-infinitesimal kernel isomorphic to  \[\spec(\kk[X_{n,i}; n\in \N_0, 0\leq i<e]/( X_{n,i}^{q^{n(e-1)+i}} ).\]   
\end{proposition}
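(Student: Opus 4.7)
The plan is to combine the universal homeomorphism argument of Lemma \ref{l.pf} with an explicit Witt-coordinate computation of $\bm u^{\ra}_{\kk}$. By Lemma \ref{l.ue}, $u_B^{\ra}$ is bijective for every perfect $\kk$-algebra $B$, so applying Lemma \ref{l.pf} to $\psi = \bm u^{\ra}_{\kk}$ yields the first claim, that $\bm u^{\ra}_{\kk}$ is a universal homeomorphism.

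For the kernel, I would derive a closed-form expression for $u_B^{\ra}$. Specialising Lemma \ref{l.uvf} to $r=1$ gives $u_B \circ V_B = (\pi/\varpi)\,V'_B \circ u_B$; iterating this and using the Teichm\"{u}ller compatibility from Lemma \ref{l.utau} together with the $\cO'$-linearity of $V'_B$ produces $u_B(V_B^n[b]) = \alpha^n \varpi^{n(e-1)} (V'_B)^n([b])$ for $b \in B$. Applying this to the decomposition $\bm b_i = \sum_n V_B^n[b_{n,i}]$ of Remark \ref{r.glue} and plugging into \eqref{e.urab} yields
\[
u_B^{\ra}\bigl((b_{n,i})\bigr) = \sum_{n,i} \alpha^n\, \varpi^{n(e-1)+i}\, (V'_B)^n\bigl([b_{n,i}]\bigr).
\]
For $B$ a $\kk$-algebra one has $\varpi B = 0$ and $V'_B F'_B = \varpi\cdot\id$ on $W_{\cO'}(B)$; combined with $F'_B[a] = [a^q]$ this gives the identity $\varpi (V'_B)^n[a] = (V'_B)^{n+1}[a^q]$, which after iteration lets me absorb all powers of $\varpi$, arriving at
\[
u_B^{\ra}\bigl((b_{n,i})\bigr) = \sum_{n,i} \alpha^n\, (V'_B)^{ne+i}\bigl([b_{n,i}^{q^{n(e-1)+i}}]\bigr).
\]

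To extract the kernel I would filter $W_{\cO'}(B)$ by the ideals $(V'_B)^m W_{\cO'}(B)$ and match this with the filtration on the source indexed by the bijection $(n,i)\leftrightarrow ne+i$. The key computation is that $\alpha^n (V'_B)^m[x] \equiv (V'_B)^m[\bar\alpha^{n q^m} x] \pmod{(V'_B)^{m+1}}$: this follows from $\alpha \equiv [\bar\alpha] \pmod{V'_B W_{\cO'}(B)}$, from the multiplicativity $(V')^a W \cdot (V')^b W \subset (V')^{a+b} W$, and from the universal identity $[a](V'_B)^m[b] = (V'_B)^m[a^{q^m}b]$. Thus on the $m$th graded piece (with $m = ne+i$), the map $\bm u^{\ra}_{\kk}$ acts as $X_{n,i} \mapsto \bar\alpha^{n q^m} X_{n,i}^{q^{n(e-1)+i}}$ from $\A^1_\kk$ to $\A^1_\kk$; since $\bar\alpha \in \kk^{\times}$, its scheme-theoretic kernel is $\spec\kk[X_{n,i}]/(X_{n,i}^{q^{n(e-1)+i}})$. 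An induction on $m$ then identifies the full defining ideal of the kernel with $(X_{n,i}^{q^{n(e-1)+i}})_{n,i}$, giving the claim after passing to the projective limit of truncations.

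The main obstacle is verifying rigorously that, at each level $m$, no ``off-diagonal'' contribution slips in from higher Witt-components of $\alpha^n$ or from Witt-vector carries. This is exactly what the multiplicativity of the $V'$-adic filtration ensures, together with the invertibility of $\bar\alpha$, which prevents any non-trivial relation between the new generator obtained at level $m$ and the ones imposed at lower levels.
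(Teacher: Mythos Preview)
Your argument is correct and reaches the same conclusion, but the route differs from the paper's. The paper works with the ghost map: it lifts to $\cO'$, writes $\sum_{j}\varpi^{j}\Phi_{s}(X_{\cdot,j})=\Phi'_{s}(u^{\ra}_{0},u^{\ra}_{1},\dots)$, and by an induction on $s$ shows directly that $u^{\ra}_{s}\equiv \alpha^{n}X_{n,i}^{q^{n(e-1)+i}}\pmod{(\varpi,J_{s-1})}$ (with $s=ne+i$), so that the ideals $(\varpi,I_{s})$ and $(\varpi,J_{s})$ coincide. Your approach instead stays over $\kk$, derives a closed formula $u_{B}^{\ra}((b_{n,i}))=\sum_{n,i}\alpha^{n}(V'_{B})^{ne+i}[\,b_{n,i}^{q^{n(e-1)+i}}\,]$ from Lemmas~\ref{l.utau} and~\ref{l.uvf} and the identity $\varpi=V'F'$, and then runs the induction through the $V'$-adic filtration. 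The key step that makes your induction close is the one you allude to but do not quite state: on the subscheme cut out by $J_{m-1}$ each lower term $\alpha^{n'}(V')^{m'}[\,b_{n',i'}^{q^{n'(e-1)+i'}}\,]$ with $m'<m$ vanishes identically, so no Witt carries from addition survive and the new relation at level $m$ is exactly $X_{n,i}^{q^{n(e-1)+i}}$ (up to the unit $\bar\alpha^{nq^{m}}$). Your method has the virtue of yielding an explicit formula for $u^{\ra}_{B}$ and of recycling the structural lemmas already proved; the paper's ghost-component manipulation is more self-contained and avoids the (easy) continuity argument needed to pass $u_{B}$ through the infinite sum $\sum_{n}V^{n}[b_{n,i}]$.
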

\begin{proof}
The first assertion follows from Lemmas \ref{l.pf} and \ref{l.ue}.  
	
We now describe the kernel of the morphism of $\cO'$-group schemes
\[\bm u^{\ra}\colon \prod^\varpi \W_{\cO'}=\spec \ \cO'[X_{n,i}, n\in \N_0, 0\leq i<e,]\stackrel{ }{\longrightarrow}\W_{\cO'}=\spec \ \cO'[Y_0,Y_1,\dots].
\]	Set $u_{m }^\ra=u^{\ra *}(Y_m)\in \cO'[X_{n,i}, n\in \N_0, 0\leq i<e,]$ where $u^{\ra *}$ is the homomorphism  induced by $  \bm u^\ra$  on global sections. The kernel of $\bm u^\ra$ is the closed subscheme of $\prod ^\varpi \W_\cO$  whose ideal $I$ is generated by the polynomials $u^{\ra}_m, m\geq 0$. Let $J$ be the ideal generated by the monomials $X_{n,i}^{q^{n(e-1)+i}}$. 
We want  to prove that
$I$   coincides with   $J$  modulo $\varpi$.
Both ideals admit a filtration by subideals  $I_s\subset I$, $J_s\subset J$ where $I_s$ is generated by those $u^{\ra}_m$ with $ m\leq s$ and $J_s$ is generated by monomials $X_{m,j}^{q^{m(e-1)+j}}$ such that $me+j\leq s$. 
It is sufficient to check that $I_s$ coincides with $J_s$ modulo $\varpi$ for any $s$. We prove it by induction on $s$.

Clearly $I_0=(u_0^{\ra})=(X_{0,0})=J_0$.
	Assume $s>0$, write it as $s=ne+i$ with $0\leq i<e$ and assume $(\varpi,I_m)=(\varpi ,J_m)$ for all $m<s$.
	Note that $I_s=(I_{s-1}, u_s^{\ra})$ and $J_s=(J_{s-1}, X_{n,i}^{q^{n(e-1)+i}})$. 
	Hence it is sufficient to prove that 
	\begin{equation}\label{e.claim}
	u_{ne+i}^\ra=u_s^\ra \equiv  \alpha^nX_{n,i}^{q^{n(e-1)+i}}  \quad \text{ mod } (\varpi^{s},J_{s-1}) 
	\end{equation}
since 	$\alpha:=\pi/\varpi^e$ is a unit in $\cO'$.

Note  that \begin{equation}\label{e.forc} \sum_{j=0}^{e-1}\varpi^j \Phi_{s }(X_{\cdot,j})=\Phi_{s}'(u^\ra_0,  \dots)=\Phi_{s-1}'((u_0^\ra)^q,\dots )+\varpi^{s} u_{s}^\ra,\end{equation}
where the first equality follows by the commutativity of diagram \eqref{d.double} and the second one by definition of the polynomials $\Phi'_m$.
	The left hand side of \eqref{e.forc} is sum of monomials of the form
\[\varpi^j \pi^m X_{m,j}^{q^{s-m}}=\varpi^{me+j}\alpha^mX_{m,j}^{q^{s-m}} 
\]
with  $m\leq s=ne+i$ and $ 0\leq j<e$.

	If $m>n$,  the $\varpi$-order of the coefficient is bigger than $s$; similarly if $m=n$ and $j>i$. 
	Hence
	\begin{equation*}\sum_{j=0}^{e-1}\varpi^j \Phi_{s,j}\equiv
	\varpi^{s}\alpha^nX_{n,i}^{q^{s-n}} +\sum_{me+j<s} \varpi^{me+j}\alpha^mX_{m,j}^{q^{s-m}} \quad 
	\text{ mod } (\varpi^{s+1}), \end{equation*}
 and one concludes that
 	\begin{equation}\label{e.equiv1}\sum_{j=0}^{e-1}\varpi^j \Phi_{s,j}\equiv
 \varpi^{s}\alpha^nX_{n,i}^{q^{s-n}}   \quad 
 \text{ mod } (\varpi^{s+1}, J_{s-1}). \end{equation} 
 since $X_{m,j}^{q^{s-m}}=X_{m,j}^{q^{ne+i-m}}$ is a power of $X_{m,j}^{m(e-1)+j}\in J_{s-1}$ when $me+j<ne+i=s$. 
 
 We now discuss the right hand side in \eqref{e.forc}.
\begin{equation}\label{e.equiv2}
	\Phi_{s-1}'((u_0^\ra)^q,\dots )+\varpi^{s} u_{s}^\ra =
	\sum_{l=0}^{s-1}\varpi^l (u_l^\ra)^{q^{s-l}} +\varpi^{s} u_{s}^\ra
\equiv \varpi^{s} u_{s}^\ra \quad\text{ mod } (\varpi^{s+1}, J_{s-1}),
\end{equation} 	
 where the last equivalence follows from the fact that $u^\ra_l\equiv 0 $ modulo $(\varpi, J_{s-1})$ by inductive hypothesis.  
 We conclude then by \eqref{e.forc}, \eqref{e.equiv1} and \eqref{e.equiv2} that
 \[\varpi^{s} u_{s}^\ra \equiv  \varpi^{s}\alpha^nX_{n,i}^{q^{s-n}}  \quad 
 \text{ mod } (\varpi^{s+1}, J_{s-1}),  \] 
 whence claim \eqref{e.claim} is true and the proof is finished.
\end{proof}

\subsubsection{The general case}
Let $\cO^{\un}$ be the maximal unramified extension of $\cO$ in $\cO'$. Then by \eqref{e.ucomp} $u_B=u_{(\cO,\cO'),B}$ is the composition
\begin{equation}\label{e.www} W_\cO(B)\stackrel{u_{(\cO,\cO^{\un}),B}}{\longrightarrow} W_{\cO^{\un}}(B)\stackrel{u_{(\cO^{\un},\cO'),B}}{\longrightarrow} W_{\cO'}(B) 
\end{equation}
and results on $u_B$ are usually deduced by a d\'evissage argument. We see here below an example. 
\begin{lemma}\label{l.wow}
	Set $\cO_0=W(\kk)$ and let $B$ be a reduced (respectively semiperfect, perfect) $\kk$-algebra. Then  the homomorphism \[ r_B\colon  W(B) \otimes_{\cO_0}\cO\to W_{\cO}(B) \] 
	induced by the Drinfeld functor is an injective (respectively, surjective, bijective).  In particular the natural map $  \cO\to W_\cO(\kk)$ is an isomorphism.
\end{lemma}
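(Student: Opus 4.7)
The plan is to run the d\'evissage suggested by \eqref{e.www}. The key observation is that, since both $\cO_0 = W(\kk)$ and $\cO$ have residue field $\kk$, the maximal unramified subextension of $\cO/\cO_0$ is trivial: within the tower $\Z_p \subset \cO_0 \subset \cO$ the step $\Z_p \subset \cO_0$ is unramified (with residue extension $\F_p \subset \kk$) while $\cO_0 \subset \cO$ is totally ramified of degree $e$. By \eqref{e.ucomp}, Drinfeld's map $u_{(\Z_p,\cO),B}\colon W(B) \to W_\cO(B)$ decomposes as $u_{(\cO_0,\cO),B} \circ u_{(\Z_p,\cO_0),B}$.

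Next I would use Proposition \ref{prop.u} (so that $\bm u_{(\Z_p,\cO_0)}$ is $\cO_0$-linear) to tensor the first factor with $\cO$ over $\cO_0$, and \eqref{e.urab} for the $\cO$-linear extension of the second factor. A direct check on simple tensors $\bm b \otimes \alpha$ shows that $r_B$ coincides with the composition
\[
W(B)\otimes_{\cO_0}\cO \xrightarrow{u_{(\Z_p,\cO_0),B}\otimes \id_\cO} W_{\cO_0}(B)\otimes_{\cO_0}\cO \xrightarrow{u^\ra_{(\cO_0,\cO),B}} W_\cO(B).
\]
The first arrow is the base change of $u_{(\Z_p,\cO_0),B}$ along the finite free (hence faithfully flat) extension $\cO_0 \to \cO$, so it preserves injections, surjections, and isomorphisms; by Lemma \ref{l.ub} applied to the unramified extension $\Z_p \subset \cO_0$, the unextended map $u_{(\Z_p,\cO_0),B}$ is injective (resp.\ surjective, bijective) when the $\kk$-algebra $B$ is reduced (resp.\ semiperfect, perfect). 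The second arrow has the analogous property by Lemma \ref{l.ue} applied to the totally ramified extension $\cO_0 \subset \cO$ of degree $e$.

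Composing, $r_B$ inherits the desired property under each hypothesis on $B$. For the final assertion, $\kk$ is perfect, so $r_\kk$ is bijective; since $W(\kk)\otimes_{\cO_0}\cO = \cO_0 \otimes_{\cO_0}\cO \simeq \cO$, the natural map $\cO \to W_\cO(\kk)$ is an isomorphism. The only real obstacle is bookkeeping: one must verify that $r_B$ really equals the stated composition, but this reduces immediately to the $\cO_0$-linearity of $u_{(\Z_p,\cO_0),B}$ provided by Proposition \ref{prop.u}.
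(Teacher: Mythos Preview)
Your proposal is correct and follows essentially the same approach as the paper: factor $r_B$ through the tower $\Z_p\subset \cO_0\subset \cO$ as the composition $(u_{(\Z_p,\cO_0),B}\otimes\id_\cO)$ followed by $u^{\ra}_{(\cO_0,\cO),B}$, then invoke Lemma~\ref{l.ub} for the unramified step and Lemma~\ref{l.ue} for the totally ramified step, using that $\cO$ is free over $\cO_0$ so tensoring preserves injectivity/surjectivity. You give slightly more detail than the paper (the faithful flatness remark and the explicit treatment of the final assertion via $W(\kk)\otimes_{\cO_0}\cO\simeq\cO$), but the argument is the same.
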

\begin{proof}
Recall that  $\cO_0/\Z_p$ is unramified, $W(B)=W_{\Z_p}(B)$ and the extension $\cO/\cO_0$ is totally ramified. The homomorphism in the lemma is then the composition 
\begin{equation}\label{e.unra} W(B)\otimes_{\cO_0}\cO
\stackrel{u^{\un}\otimes{\id_\cO}}{\longrightarrow}
W_{\cO_0}(B)\otimes_{\cO_0}\cO 
\stackrel{u^{\ra}}{\longrightarrow} W_\cO(B),\end{equation}	
where $u^{\un}\colon u_{(\Z_p,\cO_0),B}\colon W(B)\to W_{\cO_0}(B)$ and $u^{\ra}=u_{(\cO_0,\cO),B}\otimes {\id_{\cO }}$.
Since   $\cO$ is a free $\cO_0$-module, it suffices to  check the indicated properties for $u^{\un}$ and $u^{\ra}$. These follow by Lemmas  \ref{l.ub} and \ref{l.ue}.    
\end{proof}

\subsubsection{The case $\pi=\varpi^e$}

The description of $u_B$ is particularly nice under the assumption that $\pi=\varpi^e$.
Note that if $\cO'/\cO$ is tamely ramified the hypothesis is satisfied up to enlarging $\cO'$.

\begin{lemma} \label{l.u2}
Let $B$ be a $\kk'$-algebra and assume $\pi=\varpi^e$. Then $u_B\colon W_\cO(B)\to W_{\cO'}(B)$ factors through the subset $W_{\cO',e\N_0}(B)$ consisting of vectors $\bm b=(b_0,\dots)$ such that $b_j=0$ if $e\nmid j$. If $B$ is semiperfect its image is $W_{\cO',e\N_0}(B)$, thus in this hypothesis, $W_{\cO',e\N_0}(B)$ is a subring of $W_{\cO'}(B)$. If $B$ is perfect then $W_{\cO}(B)$ is isomorphic to $W_{\cO',e\N_0}(B)$.
\end{lemma}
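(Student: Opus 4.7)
The plan is to first treat the semiperfect case via an explicit computation with Teichm\"uller expansions, then promote the factorization to arbitrary $\kk'$-algebras by a universal-polynomial argument, and finally combine with Lemma \ref{l.inj} to conclude the perfect case.

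For $B$ semiperfect, Lemma \ref{l.wn} writes any $\bm b \in W_\cO(B)$ as $\sum_{j\geq 0} [b_j]\pi^j$, and \eqref{e.ubpi} yields $u_B(\bm b) = \sum_{j\geq 0} [b_j]\varpi^{ej}$ in $W_{\cO'}(B)$. Since $\varpi B = 0$, the analogue of \eqref{e.FVVF2} for $\W_{\cO'}$ reads $\varpi = V'_B F'_B$, and an induction using \eqref{e.FVVF} and the $q^r$-power description of $F'_B$ (the analogue of \eqref{e.Fp2}) gives $\varpi^k[b] = (V'_B)^k[b^{q^{rk}}]$ for all $b \in B$ and $k \geq 0$. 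In particular $\varpi^{ej}[b_j] = (V'_B)^{ej}[b_j^{q^{rej}}]$ is supported at position $ej$, so by Remark \ref{r.glue} the total sum lies in $W_{\cO',e\N_0}(B)$. Conversely, an arbitrary element of $W_{\cO',e\N_0}(B)$ has the form $\sum_j (V'_B)^{ej}[c_j]$ by \eqref{eq.va}; semiperfectness furnishes $\tilde c_j \in B$ with $c_j = \tilde c_j^{q^{rej}}$, and running the identity backwards exhibits the element as $u_B\bigl(\sum_j [\tilde c_j]\pi^j\bigr)$, where the argument lies in $W_\cO(B)$ by Lemma \ref{l.wn}. Because $u_B$ is a ring homomorphism (Proposition \ref{prop.u}), its image $W_{\cO',e\N_0}(B)$ is automatically a subring of $W_{\cO'}(B)$.

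To extend the factorization to a general $\kk'$-algebra $B$, I would exploit the fact that $u_B$ is induced by universal polynomials $u_m \in \cO'[X_0,X_1,\dots]$ whose reductions $\bar u_m \in \kk'[X_0,X_1,\dots]$ govern $u_B$ on every $\kk'$-algebra. Applying the already-settled semiperfect case to the perfect $\kk'$-algebra $A = \kk'[X_0^{1/p^\infty}, X_1^{1/p^\infty},\dots]$ and to the tautological element $\bm X = (X_0,X_1,\dots) \in W_\cO(A)$ forces each $\bar u_m$, viewed in $A$, to vanish whenever $e \nmid m$; injectivity of the inclusion $\kk'[X_0,X_1,\dots] \hookrightarrow A$ (the target being the perfect closure of a reduced ring) then yields $\bar u_m = 0$ already in $\kk'[X_0,X_1,\dots]$, so $u_B$ factors through $W_{\cO',e\N_0}(B)$ for every $\kk'$-algebra $B$. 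Finally, when $B$ is perfect, the injectivity of $u_B$ supplied by Lemma \ref{l.inj} together with the surjectivity from the semiperfect case gives the ring isomorphism $W_\cO(B) \simeq W_{\cO',e\N_0}(B)$. The delicate point I anticipate is the identity $\varpi^k[b] = (V'_B)^k[b^{q^{rk}}]$ underpinning the semiperfect computation, which requires a careful induction interchanging $V'_B$ and $F'_B$ modulo $\varpi$-scalars via \eqref{e.FVVF}; once this is in hand the remainder is routine bookkeeping.
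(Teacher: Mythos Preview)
Your argument is correct, but it follows a different route from the paper's proof.

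The paper first reduces to the totally ramified case via the factorization \eqref{e.www} (the unramified part being handled by Lemma~\ref{l.ub}), and then computes the universal polynomials $u_m=\bm u^*(Y_m)$ directly: a short recursion using $\Phi_n=\Phi_{n-1}(X_\cdot^q)+\pi^nX_n$ and $\Phi'_m(u_\cdot)=\Phi_m(X_\cdot)$ yields $u_i\equiv 0\pmod\varpi$ for $e\nmid i$ and $u_{ne}\equiv X_n^{q^{n(e-1)}}\pmod\varpi$. This immediately gives the factorization for every $\kk'$-algebra, and the explicit shape $c_{ne}=b_n^{q^{n(e-1)}}$ makes the semiperfect surjectivity transparent. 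By contrast, you first settle the semiperfect case conceptually via Teichm\"uller expansions and the identity $\varpi^k[b]=(V'_B)^k[b^{q^{rk}}]$, and only afterwards recover the universal vanishing $\bar u_m=0$ for $e\nmid m$ by specializing to a perfect polynomial algebra. Your route avoids the d\'evissage into unramified and totally ramified pieces and is slightly more functorial, at the cost of not producing the explicit congruence for $u_{ne}$; the paper's route is more computational but yields that extra information for free. The ``delicate point'' you flag is in fact routine: once $\varpi B=0$ one has $\varpi=V'_BF'_B$ and $F'_B[b]=[b^{q^r}]$, and the induction step $\varpi(V'_B)^k=V'_B(F'_BV'_B)(V'_B)^{k-1}=(V'_B)^{k+1}F'_B(V'_B)^{k-1}\cdots$ actually simplifies to the direct $\varpi^{k+1}[b]=\varpi\cdot(V'_B)^k[b^{q^{rk}}]=(V'_B)^kF'_B(V'_B)^k\cdots$—more cleanly, just use $\varpi\cdot\bm c=V'_BF'_B\bm c$ repeatedly together with $F'_B(V'_B)^k=\varpi(V'_B)^{k-1}$ from \eqref{e.FVVF}.
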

\begin{proof}
	By Lemma \ref{l.ub} the case $e=1$ is clear.
	Since $u_B=u_{(\cO,\cO'),B}$ is the composition of the maps in \eqref{e.www} we may assume  that $\cO'/\cO$ is totally ramified. Let $\W_\cO=\spec \ \cO[X_0, \dots]$, $\W_{\cO'}=\spec \ \cO'[Y_0, \dots]$ and set $u_i=u^*(Y_i)$.
It is
\begin{align*}\Phi_{n}(X_0,X_1,\dots)&= \Phi_{n-1}(X_0^q,\dots)+\pi^nX_n  \qquad \text{ for } n\geq 1,\\
	\Phi_{n}^\prime(Y_0,Y_1,\dots)&= \Phi_{n-1}'(Y_0^q,\dots)+\varpi^nY_n   \qquad   \text{ for } n\geq 1, \\
	\Phi'_m(u_0,u_1,\dots)&= \Phi_{m}(X_0,X_1,\dots) \qquad \text{ for } m\geq 0.
	\end{align*}  
 One checks recursively that $u_0=X_0$, $u_i\equiv 0$ mod $(\varpi)$ if $e\nmid i$ and $u_{ne}\equiv X_n^{q^{n(e-1)}}$ mod $(\varpi)$. 
Hence, if $B$ is any $\cO'$-algebra and $\bm b=(b_0,\dots)\in W_\cO(B)$, then $u_B(\bm b)=\bm c=(c_0,c_1,\dots)$ with $c_0=b_0$, $c_{ne}\equiv b_n^{q^{e(n-1)}}$ mod $\varpi B $ and $c_j\in \varpi B$ otherwise. 
In particular, if $B$ is a $\kk'$-algebra, it is $c_{ne}=b_n^{q^{e(n-1)}}$ for any $n\geq 0$ and zero otherwise. This implies that $u_B(W_\cO(B))\subseteq W_{\cO',e\N_0}(B)$ with equality if $B$ is semiperfect. \end{proof}

If $\kk=\kk'$ we have a better understanding of $\bm u^\ra$.

\begin{lemma} 
Let $\cO'/\cO$ be a
totally ramified extension of degree $e$. Assume $\pi=\varpi^e$ and let $B$ be a $\kk'$-algebra. Then the homomorphism \[
u_B^{\ra}\colon W_\cO(B)\otimes_\cO\cO'=\oplus_{i=0}^{e-1} W_\cO(B) \varpi^i \to W_{\cO'}(B), \qquad \sum_{i=0}^{e-1} \bm b_i\varpi^i \mapsto \sum_{i=0}^{e-1} u_B(\bm b_i)\varpi^i, 
\] $\bm b_i\in W_\cO(B)$, maps the module $W_\cO(B)\varpi^i$ into $ W_{\cO',i+e\N_{0}}(B)$ and it is injective (respectively, surjective, bijective) if $B$ is reduced (respectively, semiperfect, perfect). 
\end{lemma}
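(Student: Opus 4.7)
The plan is to exploit the direct-sum decomposition of $W_{\cO'}(B)$ into its coordinate pieces modulo $e$, combine it with Lemma \ref{l.u2}, and then reduce injectivity/surjectivity/bijectivity of $u_B^\ra$ to the same properties for the $q^i$-th power map on $B$.

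First I would compute how multiplication by $\varpi^i$ acts on $W_{\cO'}(B)$. Because $\cO'/\cO$ is totally ramified, $\kk'=\kk$ and $B$ is a $\kk$-algebra, so $\varpi B=0$ and \eqref{e.FVVF2} (applied to $\cO'$ in place of $\cO$) gives $V'_B F'_B=\varpi\cdot\id$. Combined with the explicit form of Frobenius on $\kk$-algebras \eqref{e.Fp2} and the definition of $V'_B$, this yields by induction the formula
\[\varpi^i \cdot (c_0,c_1,c_2,\dots)=(\underbrace{0,\dots,0}_{i},\,c_0^{q^i},c_1^{q^i},c_2^{q^i},\dots).\]
By Lemma \ref{l.u2}, $u_B(\bm b_i)\in W_{\cO',e\N_0}(B)$, and applying the formula above with the fact that $i+e\N_0$ is exactly the set of positions obtained from $e\N_0$ by shifting by $i$, we conclude $u_B(\bm b_i)\varpi^i\in W_{\cO',i+e\N_0}(B)$. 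This establishes the first assertion.

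Next I would observe that the subsets $W_{\cO',i+e\N_0}(B)$, $0\le i<e$, partition the coordinate positions of vectors in $W_{\cO'}(B)$. By the gluing principle of Remark \ref{r.glue}, every $\bm d\in W_{\cO'}(B)$ decomposes \emph{uniquely} as $\bm d=\sum_{i=0}^{e-1}\bm d_i$ with $\bm d_i\in W_{\cO',i+e\N_0}(B)$. The identity
\[u_B^\ra\bigl(\textstyle\sum_i \bm b_i\varpi^i\bigr)=\sum_i u_B(\bm b_i)\,\varpi^i\]
is then precisely such a decomposition, so $u_B^\ra$ identifies with the direct sum of the $e$ maps
\[\phi_i\colon W_\cO(B)\longrightarrow W_{\cO',i+e\N_0}(B),\qquad \bm b\mapsto u_B(\bm b)\,\varpi^i.\]
Consequently $u_B^\ra$ is injective (resp.\ surjective, bijective) iff every $\phi_i$ is so.

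Finally, each $\phi_i$ is the composite of $u_B\colon W_\cO(B)\to W_{\cO',e\N_0}(B)$ with multiplication by $\varpi^i$. The first factor is injective when $B$ is reduced (Lemma \ref{l.inj}), surjective when $B$ is semiperfect, and bijective when $B$ is perfect (Lemma \ref{l.u2}). By the coordinate formula above, the second factor acts on $W_{\cO',e\N_0}(B)\to W_{\cO',i+e\N_0}(B)$ by $(c_{en})_n\mapsto (c_{en}^{q^i})_n$ with a position shift; this is injective when $B$ is reduced, surjective when $B$ is semiperfect (taking $q^i$-th roots, which exist by semiperfection), and bijective when $B$ is perfect. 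Composing gives the three statements.

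The only non-routine step is the explicit formula for $\varpi^i$ acting on $W_{\cO'}(B)$; everything else is a bookkeeping application of the prior lemmas plus the gluing property of Remark \ref{r.glue}.
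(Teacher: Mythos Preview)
Your proof is correct and follows essentially the same route as the paper: use Lemma \ref{l.u2} together with $\varpi = V'_B F'_B$ from \eqref{e.FVVF2} and \eqref{e.Fp2} to land $u_B(\bm b_i)\varpi^i$ in $W_{\cO',i+e\N_0}(B)$, then invoke the gluing of Remark \ref{r.glue} to split $u_B^\ra$ into $e$ independent pieces. The paper's version is terser (it wraps up by citing Lemma \ref{l.inj}), whereas you spell out the separate contributions of Lemmas \ref{l.inj}, \ref{l.u2} and of the coordinatewise $q^i$-power map, but the underlying argument is the same.
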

\begin{proof}
We have seen in Lemma \ref{l.u2} that for any $\bm b\in W_\cO(B)$ it is $u_B(\bm b)\in W_{\cO',e\N_0}(B)$;  hence by \eqref{e.FVVF} and \eqref{e.Fp2} $u_B(\bm b)\varpi=VFu_B(\bm b)\in VW_{\cO',e\N_{0}}(B)=W_{\cO',1+e\N_{0}}(B)$  and recursively $u_B(\bm b)\varpi^i\in W_{\cO',i+e\N_{0}}(B)$.  Note further that the subsets $e\N_{0}, 1+ e\N_{0},\dots, e-1 +e\N_{0}$ form a partition of $\N_0$ so that the sum $\sum_{i=0}^{e-1} u_B(\bm b_i)\varpi^i$ is simply obtained by ``glueing''  the components of each vector $u_B(\bm b_i)\varpi^i=V^iF^iu_B(\bm b_i)$ (see Remark \ref{r.glue}). 
As a consequence the injectivity (respectively, surjectivity) statement follows from Lemma \ref{l.inj}.
\end{proof}

\section{The comparison result}
Let $\cO$ be a complete discrete valuation ring with residue field $\kk$ of cardinality $q=p^h$, and absolute ramification $e$. Set  $\cO_0=W(\kk)$. As seen in Lemma \ref{l.wow}
 we may consider the Drinfeld map $u\colon W(A)\to W_\cO(A)$ for any $\kk$-algebra $A$ and hence we extend it to a natural homomorphism of $\cO$-algebras
\begin{equation*}
r_A:=u\otimes{\id_{\cO}}\colon W(A)\otimes_{W(\kk)}\cO \to W_\cO(A).
\end{equation*}
In other word, due to the description of $A$-sections of $\Rg_\cO$ in \eqref{e.ra}, there exists a morphism of $\kk$-ring schemes
\begin{equation}\label{e.r}\bm r\colon \Rg_\cO\to \W_{\cO,\kk}:=\W_{\cO}\times_{\cO}\spec(\kk) 	\end{equation}
that coincides with $r_A$ on $A$-sections. Then Lemma  \ref{l.wow} can be rewritten as follows.
\begin{lemma}  \label{l.r}
	If $A$ is a reduced (respectively, semiperfect, perfect) $\kk$-algebra then $r_A\colon \Rg_\cO(A)\to \W_\cO(A)$ is injective (respectively, surjective, bijective).  
\end{lemma}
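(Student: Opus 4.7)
The plan is to recognize the statement as nothing but the scheme-theoretic restatement of Lemma \ref{l.wow}. Two identifications make this precise: by equation \eqref{e.ra}, the $A$-sections of the Greenberg algebra are given canonically by $\Rg_\cO(A) = W(A) \otimes_{W(\kk)} \cO$, so the domain of $r_A$ matches that of the homomorphism in Lemma \ref{l.wow}; and by the definition of $\bm r$ in \eqref{e.r}, the map induced on $A$-sections is precisely $u \otimes \id_\cO$ with $B = A$. Under these identifications the three claims — injectivity for $A$ reduced, surjectivity for $A$ semiperfect, bijectivity for $A$ perfect — are literally the content of Lemma \ref{l.wow}, and no further argument is required.

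If one wished to explain the substance rather than merely cite, the step to highlight would be the factorization of $r_A$ through the maximal unramified subextension $\cO_0 = W(\kk) \subseteq \cO$, namely
\[
W(A) \otimes_{\cO_0} \cO \xrightarrow{\;u^{\un} \otimes \id_\cO\;} W_{\cO_0}(A) \otimes_{\cO_0} \cO \xrightarrow{\;u^{\ra}\;} W_\cO(A),
\]
where the first arrow is the base change to $\cO$ of the Drinfeld map $u_{(\Z_p,\cO_0)}$ (the unramified case treated in Lemma \ref{l.ub}) and the second is $u_{(\cO_0,\cO)} \otimes \id_{\cO}$ (the totally ramified case treated in Lemma \ref{l.ue}). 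Each factor has the required injectivity/surjectivity/bijectivity under the corresponding hypothesis on $A$, and because $\cO$ is a free $\cO_0$-module, tensoring with $\cO$ preserves the property in each case. The composite then inherits it.

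There is no genuine obstacle here: the heavy lifting has already been carried out in Lemmas \ref{l.ub} and \ref{l.ue} and packaged into Lemma \ref{l.wow}. The only work that remains is the translation between ring-theoretic and scheme-theoretic language via \eqref{e.ra}, which is tautological.
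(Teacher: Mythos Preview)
Your proposal is correct and matches the paper's approach exactly: the paper introduces Lemma~\ref{l.r} with the sentence ``Then Lemma~\ref{l.wow} can be rewritten as follows'' and gives no further proof, so the entire content is precisely the identification $\Rg_\cO(A)=W(A)\otimes_{W(\kk)}\cO$ from \eqref{e.ra} together with the definition of $r_A$ as $u\otimes\id_\cO$. Your optional unpacking of the factorization through $\cO_0$ simply recapitulates the proof of Lemma~\ref{l.wow} itself, which is fine but not needed here.
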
  

We can now prove the comparison result announced in the introduction.
\begin{theorem} \label{t.m} 
	The morphism $\bm r\colon \Rg_\cO\to \W_{\cO,\kk}$ defined in \eqref{e.r} induces an isomorphism $\bm r^\pf\colon \Rg_\cO^\pf\to \W_{\cO,\kk}^\pf$ on perfections.
	Hence $\bm r$ is a universal homeomorphism, thus surjective, and it has pro-infinitesimal kernel.  
\end{theorem}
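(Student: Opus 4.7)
The plan is to reduce the whole statement to a single application of Lemma \ref{l.pf}, feeding it the hypothesis supplied by Lemma \ref{l.r}. Indeed, Lemma \ref{l.r} already asserts that $r_A\colon \Rg_\cO(A)\to \W_{\cO,\kk}(A)$ is bijective whenever $A$ is a perfect $\kk$-algebra. Setting $\psi=\bm r$ in Lemma \ref{l.pf} therefore yields in one stroke both claims: $\bm r^\pf\colon \Rg_\cO^\pf\to \W_{\cO,\kk}^\pf$ is an isomorphism, and $\bm r$ is a universal homeomorphism of $\kk$-schemes.

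From the universal homeomorphism property, surjectivity of $\bm r$ as a morphism of schemes is automatic, since universal homeomorphisms are by definition surjective on underlying topological spaces (and remain so after any base change). This takes care of the second sentence of the theorem.

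It remains to verify that the kernel $\bm K:=\ker(\bm r)$, viewed as a closed $\kk$-subgroup scheme of $\Rg_\cO$, is pro-infinitesimal. Since $\bm r$ is a morphism of $\kk$-ring schemes, $\bm K(A)=\ker(r_A)$ for every $\kk$-algebra $A$. Lemma \ref{l.r} gives $\ker(r_A)=0$ for every perfect $\kk$-algebra $A$, and hence $\bm K(A)=0$ on perfect test objects. Invoking the adjunction \eqref{e.adj}, the vanishing of $\Hom_\kk(Z,\bm K)$ for all perfect $\kk$-schemes $Z$ is equivalent to $\bm K^\pf=0$, which is exactly the content of $\bm K$ being pro-infinitesimal (it is a projective limit of infinitesimal $\kk$-group schemes, all killed by passing to the perfection).

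The proof is essentially a formal consequence of the groundwork carried out in Sections \ref{s.w}--\ref{s.rrwv}; the substantive input lies entirely in Lemma \ref{l.wow} (and hence in Lemmas \ref{l.ub} and \ref{l.ue}) through its restatement as Lemma \ref{l.r}. The only small subtlety I anticipate is pinning down the precise interpretation of ``pro-infinitesimal kernel''; once this is read as $\bm K^\pf=0$, or equivalently as $\bm K=\varprojlim \bm K/\bm K^\circ_n$ for an exhausting family of infinitesimal quotients, the argument requires no calculation beyond what Lemmas \ref{l.pf} and \ref{l.r} already provide.
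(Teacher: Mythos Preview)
Your argument is essentially the paper's own: both invoke Lemma~\ref{l.r} to verify the hypothesis of Lemma~\ref{l.pf}, obtaining at once that $\bm r^{\pf}$ is an isomorphism and $\bm r$ is a universal homeomorphism. The only difference is in the last step: the paper checks $\ker(\bm r)(\bar\kk)=0$ (again via Lemma~\ref{l.r} and the adjunction \eqref{e.adj}) and then cites \cite[V \S3 Lemme~1.4]{dg} to conclude that the kernel is pro-infinitesimal, whereas you argue directly that $\bm K^{\pf}=0$ and identify this with pro-infinitesimality. Your route is a touch cleaner conceptually, but the equivalence ``$\bm K^{\pf}=0\Leftrightarrow\bm K$ pro-infinitesimal'' is not the paper's working definition and deserves a reference or a one-line argument; the paper's citation of \cite{dg} does that bookkeeping for you.
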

\begin{proof}
By   Lemma \ref{l.r} and \ref{l.pf} the morphism $\bm r^\pf$ is invertible and $\bm r$ is a universal homeomorphism.
	Further, $\bm r$ is a morphism of affine $\kk$-group schemes and
	\[\ker(\bm r)(\bar\kk)= \ker\left(\Rg_\cO(\bar \kk)\to \W_\cO(\bar \kk)\right)\simeq \ker\left(\Rg_\cO^\pf(\bar \kk)\to \W_\cO^\pf(\bar\kk)\right)= \ker(\bm r^\pf)(\bar\kk )=\{0\},\] where $\bar k$ denotes an algebraic closure of $\kk$ and the bijection in the middle follows by \eqref{e.adj}. Hence $\ker(\bm r)$ is proinfinitesimal by \cite[V \S 3 Lemme 1.4]{dg}.
\end{proof}  

We can say something more on the kernel of $ \bm r$.
 
 \begin{lemma} 	\label{l.examples}
 	a) If $\cO=W(\kk)$ then $\bm r=\bm u_{(\Z_p, W(\kk)),\kk}$  and \[\ker \bm r\simeq \spec(\kk[X_0,X_1,\dots]/(X_0, \dots,X_i^{p^{i(h-1)}}\!\!, \dots).\]
 	
 	b) If $\kk=\F_p$,  then $\bm r=\bm u^\ra_{\kk}$ and
 \[\ker \bm r\simeq \spec(\F_p[X_{n,i}; n\in \N_0, 0\leq i<e]/( X_{n,i}^{p^{n(e-1)+i}}  ; n\in \N_0, 0\leq i<e)).\]  
 	
 	c) In general, $\ker(\bm r)$ is extension of a proinfinitesimal group scheme as in Proposition \ref{p.integral} by the product of $e$ proinfinitesimal group schemes as in a).
 \end{lemma}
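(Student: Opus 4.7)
The plan is to exploit the two-step factorization of $\bm r$ recorded in Lemma \ref{l.wow}: on $B$-sections,
\[
r_B \colon W(B) \otimes_{\cO_0} \cO \xrightarrow{u^\un \otimes \id_\cO} W_{\cO_0}(B) \otimes_{\cO_0} \cO \xrightarrow{u^\ra} W_\cO(B),
\]
where $\cO_0 = W(\kk)$, the map $u^\un = u_{(\Z_p, \cO_0), B}$ corresponds to the unramified extension $\cO_0/\Z_p$, and $u^\ra = u_{(\cO_0, \cO), B} \otimes \id_\cO$ corresponds to the totally ramified extension $\cO/\cO_0$ of degree $e$. Taking special fibres, this becomes $\bm r = \bm u^\ra_\kk \circ \phi$, with $\phi := (\bm u^\un \otimes \id_\cO)_\kk$.

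For (a), one has $\cO = \cO_0$, so $e=1$, $u^\ra$ is the identity and $\bm r = \bm u_{(\Z_p, W(\kk)), \kk}$. The residue field of $W(\kk)$ is $\kk$ of cardinality $p^h$, so Proposition \ref{p.perf} applied to the unramified extension $W(\kk)/\Z_p$ with $(q,r) = (p,h)$ yields the stated kernel. For (b), one has $\cO_0 = W(\F_p) = \Z_p$, so $u^\un$ is the identity, and $\bm r = \bm u^\ra_\kk$ for the totally ramified extension $\cO/\Z_p$ of degree $e$; Proposition \ref{p.integral} with $q = p$ then supplies the kernel.

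For (c), both factors $\phi$ and $\bm u^\ra_\kk$ of $\bm r$ are universal homeomorphisms of affine commutative $\kk$-group schemes with proinfinitesimal kernels, by Propositions \ref{p.perf} and \ref{p.integral}; in particular they are scheme-theoretically surjective, which yields the short exact sequence of affine commutative $\kk$-group schemes
\[
0 \to \ker \phi \to \ker \bm r \to \ker \bm u^\ra_\kk \to 0.
\]
Finally, since $\cO = \bigoplus_{i=0}^{e-1} \cO_0 \pi^i$ as an $\cO_0$-module, the morphism $\phi$ decomposes, as a morphism of group schemes only (ignoring the ring structure on the target), as the product of $e$ copies of $\bm u^\un_\kk$; hence $\ker \phi$ is a product of $e$ copies of the group scheme described in (a), while $\ker \bm u^\ra_\kk$ is described by Proposition \ref{p.integral}.

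The main subtlety is the right-hand surjectivity in the exact sequence above. I plan to handle it by noting that $\phi$ is a universal homeomorphism, hence an epimorphism in the category of affine $\kk$-schemes, so the sequence is forced to be short exact when one restricts the preimage of $\ker \bm u^\ra_\kk$ under $\phi$; alternatively one can verify exactness after passing to $\bar\kk$-points as in the proof of Theorem \ref{t.m}, using that all three kernels become trivial after perfection.
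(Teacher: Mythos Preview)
Your proposal is correct and follows essentially the same route as the paper: both use the unramified/totally-ramified factorization of $\bm r$ from \eqref{e.unra} in Lemma~\ref{l.wow}, reduce parts~(a) and~(b) directly to Propositions~\ref{p.perf} and~\ref{p.integral}, and for~(c) identify the first factor with the $e$-fold product $\prod_i \bm u_{(\Z_p,\cO_0),\kk}$ as a morphism of group schemes. You are in fact slightly more careful than the paper in flagging the surjectivity needed for the extension in~(c), which the paper leaves implicit.
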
 
\begin{proof} Consider the extension $\cO/\Z_p$. Statements a) and b) follow from   Propositions \ref{p.perf} and \ref{p.integral}. For the general case, note that \eqref{e.unra} implies that $\bm r$, as morphism of $\kk$-group schemes,  is the composition
  	\[   \prod_{i=0}^{e-1} \W_{ \kk}\stackrel{\prod_i\bm u_\kk}{\longrightarrow} \prod_{i=0}^{e-1} \W_{\cO_0,\kk}\stackrel{\bm u^\ra_\kk}{\longrightarrow}  \W_{\cO,\kk}  \]
  	where $\bm u_k$ on the first arrow stays for $\bm u_{(\Z_p,\cO_0), \kk}$, whose kernel was described in a), and $\bm u^\ra_\kk$ is the morphism in Proposition \ref{p.integral} for the ramified extension $\cO/\cO_0$. Hence the conclusion follows.
 \end{proof}

%

 \end{document}